\documentclass[12pt,reqno]{amsart}
\usepackage{amsmath,amssymb,amsfonts,amscd,latexsym,amsthm,mathrsfs,verbatim,comment}
\usepackage{graphicx}
\usepackage[unicode]{hyperref}
\usepackage{hypbmsec}
\textheight22cm \textwidth15cm \hoffset-1.7cm \voffset-.5cm
%
\newtheorem{theorem}{Theorem}
\newtheorem{lemma}{Lemma}
\let\wh\widehat
\let\wt\widetilde

\renewcommand{\d}{{\mathrm d}}

\newcommand{\bc}{{\boldsymbol c}}
\newcommand{\fa}{{\mathfrak a}}
\newcommand{\fb}{{\mathfrak b}}
\newcommand{\fh}{{\mathfrak h}}
\newcommand{\fG}{{\mathfrak G}}
\let\Ga\Gamma
\begin{document}

\title{Hypergeometry inspired~by~irrationality~questions}

\author{Christian Krattenthaler}
\address{Fakult\"at f\"ur Mathematik, Universit\"at Wien, Oskar-Morgenstern-Platz 1, A-1090 Vienna, Austria}
\urladdr{http://www.mat.univie.ac.at/~kratt}

\author{Wadim Zudilin}
\address{Department of Mathematics, IMAPP, Radboud University, PO Box 9010, 6500~GL Nijmegen, Netherlands}
\urladdr{http://www.math.ru.nl/~wzudilin}


\subjclass[2010]{11J72, 11M06, 11Y60, 33C20, 33D15, 33F10}

\keywords{Irrationality; zeta value; $\pi$; Catalan's constant; $\log2$; hypergeometric series}

\thanks{The first author is partially supported by the Austrian Science Foundation FWF, grant S50-N15,
in the framework of the Special Research Program ``Algorithmic and Enumerative Combinatorics''.}

\begin{abstract}
We report new hypergeometric constructions of rational approximations to Catalan's constant, $\log2$, and $\pi^2$,
their connection with already known ones, and underlying `permutation group' structures.
Our principal arithmetic achievement is a new partial irrationality result for the values of Riemann's zeta function at odd integers.
\end{abstract}

\maketitle

\section{Introduction}
\label{s1}

Given a real (presumably irrational!) number $\gamma$, how can one prove that it is irrational?
In certain cases (like for square roots of rationals) this is an easy task.
A more general strategy proceeds by the construction of a sequence of rational approximations $r_n=q_n\gamma-p_n\ne0$
such that $\delta_nq_n$, $\delta_np_n$ are integers for some positive integers $\delta_n$ and
$\delta_nr_n\to0$ as $n\to\infty$. This indeed guarantees that $\gamma$ is not rational.
Usually, as a bonus, such a construction also allows one to estimate the irrationality of~$\gamma$ in a quantitative form.

Producing such a sequence of rational Diophantine approximations, even with a weaker requirement on the growth, like $r_n\to0$ as $n\to\infty$, is a difficult problem.
For certain specific `interesting' numbers $\gamma\in\mathbb R$ such sequences are constructed as values of so-called hypergeometric functions; for related definitions of the latter in the ordinary and basic ($q$-) situations
we refer the reader to the books \cite{Ba35,Sl66,GR04}. One of the underlying mechanisms behind the hypergeometric settings
is the existence of numerous transformations of hypergeometric functions, that is, identities that represent the same numerical (or $q$-) quantity
in different looking ways. An arithmetic significance of such transformations is the production of identities of the form $r_n=\wt r_n$ say, where
$r_n=q_n\gamma-p_n$ and $\wt r_n=\wt q_n\gamma-\wt p_n$ for $n=0,1,2,\dots$,
while an analysis of the asymptotic behaviour of $r_n$ or $\wt r_n$, and of the corresponding (\emph{a priori} different) denominators $\delta_n$ or $\wt\delta_n$ are simpler for one of them than for the other.
In several situations, the machinery can be inverted: the equality $r_n=\wt r_n$ is predicted by computing a number of first approximations,
and then established by demonstrating that both sides satisfy the same linear recursion.
Such instances naturally call for finding purely hypergeometric proofs, which in turn may offer more general forms of the approximations.
It comes as no surprise that our computations below have been carried
out using the \textsl{Mathematica} packages HYP and HYPQ \cite{Kr95}. 

The symbiosis of arithmetic and hypergeometry is the main objective of the present note, with special emphasis on (hypergeometric) rational approximations to the following three mathematical constants (in order of their appearance below):
\begin{itemize}
\item Catalan's constant $G=\displaystyle\sum_{k=0}^\infty\frac{(-1)^k}{(2k+1)^2}$,
\qquad
\raisebox{-4mm}{\hbox{\includegraphics[scale=0.012]{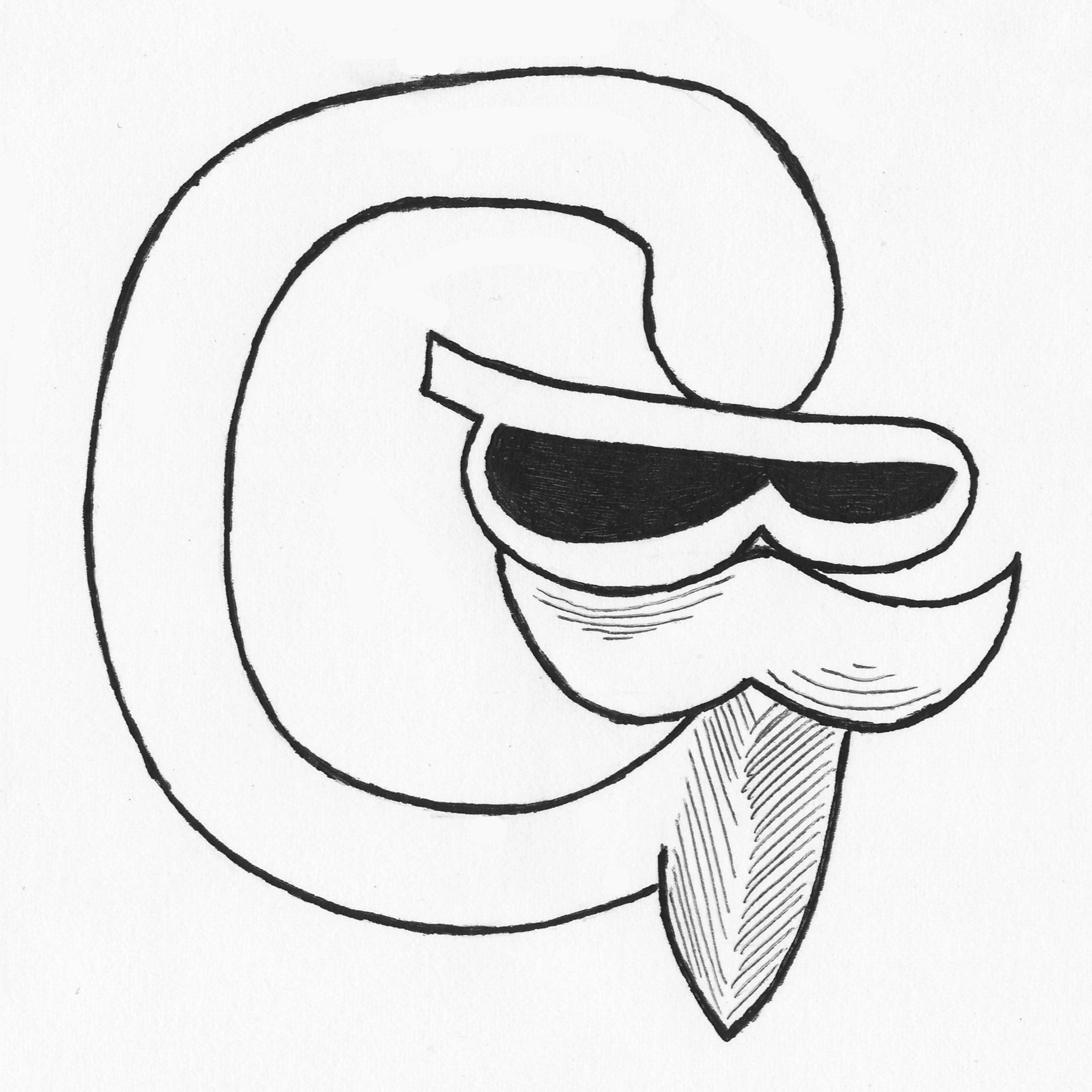}}}
\item $\displaystyle\log2=\sum_{k=1}^\infty\frac{(-1)^{k-1}}{k}$,
\qquad
\raisebox{-4mm}{\hbox{\includegraphics[scale=0.012]{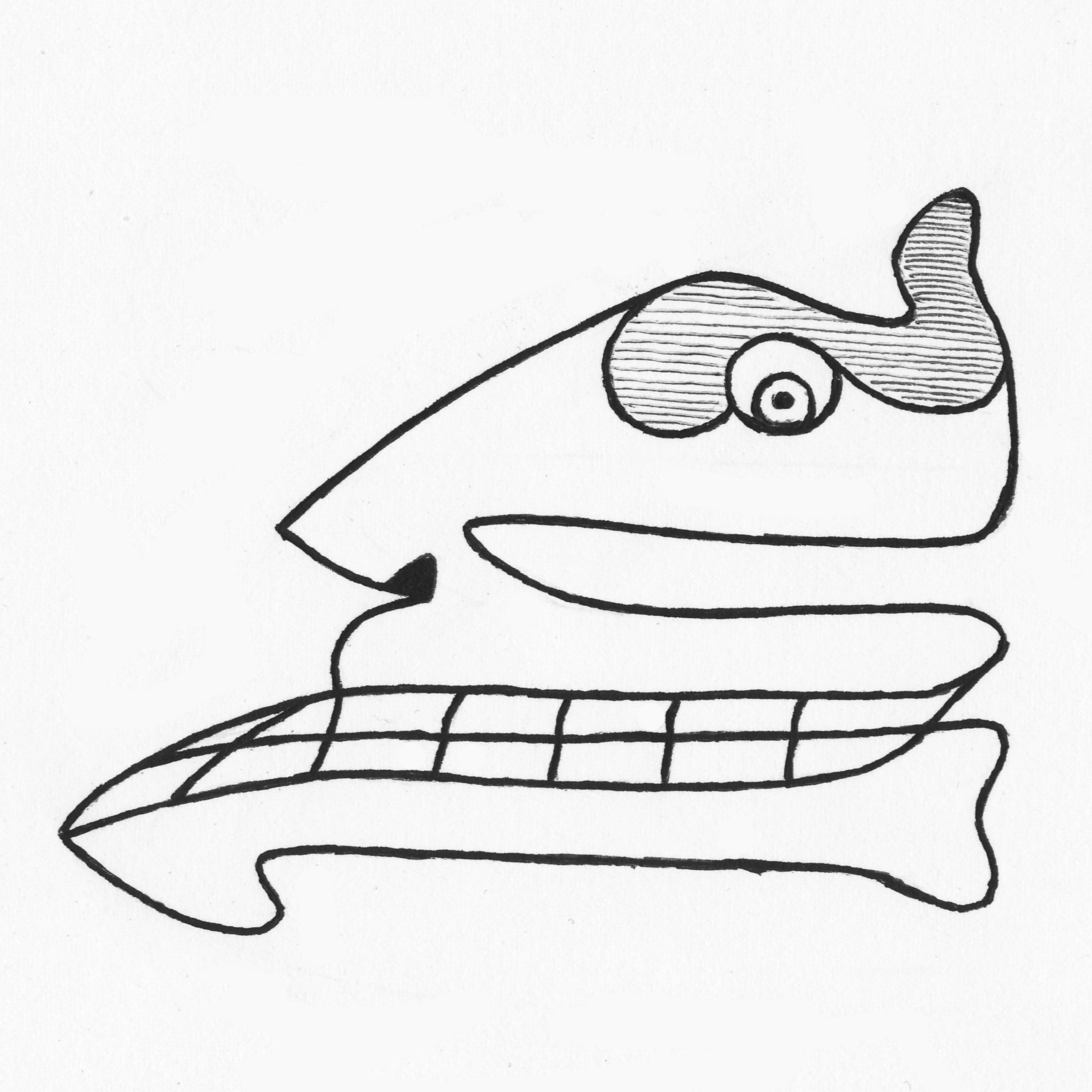}}}
\qquad\qquad
and
\item $\displaystyle\frac{\pi^2}6=\zeta(2)=\sum_{k=1}^\infty\frac1{k^2}$,
\qquad
\raisebox{-4mm}{\hbox{\includegraphics[scale=0.012]{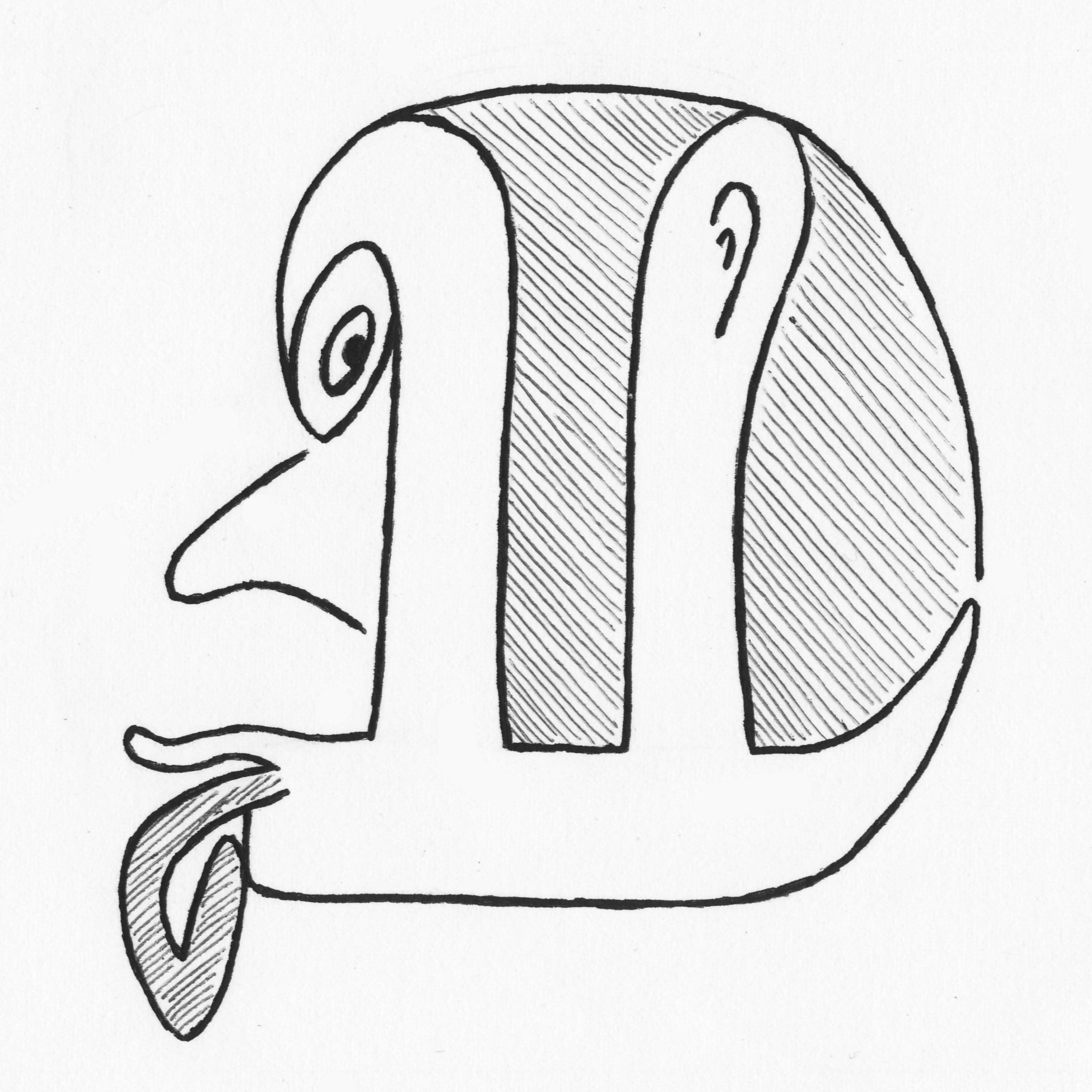}}}
\end{itemize}
which are discussed in Sections \ref{s2}, \ref{s3}, and \ref{s4}, respectively.
We intentionally personify these mathematical constants here, to stress their significance in the arithmetic-hypergeometric context.

The construction in Section~\ref{s4} indicates a certain cancellation phenomenon, which we record in Lemma~\ref{lem1}.
Application of this new ingredient to a general construction
of linear forms in the values of Riemann's zeta function $\zeta(s)$ at positive odd integers leads to the following result.

\begin{theorem}
\label{th-zeta}
For any $\lambda\in\mathbb R$, each of the two collections
$$
\biggl\{\zeta(2m+1)-\lambda\,\frac{2^{2m}(2^{2m+2}-1)|B_{2m+2}|}{(2^{2m+1}-1)(m+1)(2m)!}\,\pi^{2m+1}:m=1,2,\dots,19\biggr\}
$$
and
$$
\biggl\{\zeta(2m+1)-\lambda\,\frac{2^{2m}(2^{2m}-1)|B_{2m}|}{(2^{2m+1}-1)m(2m)!}\,\pi^{2m+1}:m=1,2,\dots,21\biggr\}
$$
contains at least one irrational number.
Here $B_{2m}$ denotes the $2m$-th Bernoulli number.
\end{theorem}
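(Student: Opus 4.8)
The plan is to argue by contradiction. Fixing $\lambda\in\mathbb R$ and, say, the first collection, I suppose that all nineteen numbers $\zeta(2m+1)-\lambda c_m\pi^{2m+1}=\rho_m$ ($m=1,\dots,19$) are rational, writing $c_m$ for the displayed coefficient; the aim is to extract from the hypergeometric machinery a sequence of ``small'' linear forms and to force a nonzero integer to tend to $0$.

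First I would pin down the general construction referred to just above the statement. From a very-well-poised hypergeometric series in an integer parameter $n$ (equivalently, a Barnes-type contour integral), the reflection symmetry of the summand annihilates all even zeta values and yields, for each $n$, a linear form $r_n=A_n+\sum_{m=1}^{M}B_{m,n}\zeta(2m+1)$ with explicit rational $A_n,B_{m,n}$, where $M=19$ (and $M=21$ for the second collection). Here Lemma~\ref{lem1} enters: the same hypergeometric data carries an even-indexed companion whose evaluation, after the cancellation the lemma records, is a \emph{second} form $\wt r_n=\wt A_n+\sum_{m=1}^{M}B_{m,n}c_m\pi^{2m+1}$ sharing the coefficients $B_{m,n}$ and having rational $\wt A_n$. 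Because both $r_n$ and $\wt r_n$ issue from one hypergeometric family, they satisfy a common Poincar\'e-type recurrence in $n$, in line with the remark in the introduction that ``both sides satisfy the same linear recursion''.

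Granting this pairing, I would eliminate $\lambda$ by playing the two forms against each other. Under the hypothesis the transcendental parts cancel, so $\omega_n:=r_n-\lambda\wt r_n=(A_n-\lambda\wt A_n)+\sum_m B_{m,n}\rho_m$ lies in $\mathbb Q+\lambda\mathbb Q$, yet $\omega_n\to0$. Clearing a common denominator $\Delta_n$ of $A_n,B_{m,n},\wt A_n$ and the $\rho_m$ gives $\Delta_n\omega_n=a_n+\lambda b_n$ with $a_n,b_n\in\mathbb Z$. Passing to consecutive indices, the integer $a_nb_{n+1}-a_{n+1}b_n$ equals $(\Delta_n\omega_n)\,b_{n+1}-(\Delta_{n+1}\omega_{n+1})\,b_n$ once the $\lambda$-terms cancel; if the estimates below force this to tend to $0$, it must vanish for large $n$, so $a_n/b_n$ is eventually a constant $k\in\mathbb Q$ and $\Delta_n\omega_n=(k+\lambda)b_n$. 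Since $b_n=-\Delta_n\wt A_n\to\infty$ while $\Delta_n\omega_n\to0$, this forces $\lambda=-k\in\mathbb Q$, contradicting the case of irrational $\lambda$; the degenerate possibility $b_n=0$ would make $r_n$ and $\wt r_n$ proportional, which is excluded because their common recurrence has nonzero Casoratian. Rational $\lambda$ is the easy case, where $\omega_n$ is itself a rational of denominator $O(\Delta_n)$ tending to $0$, hence eventually $0$.

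The quantitative heart, and where I expect the genuine difficulty, is forcing $(\Delta_n\omega_n)\,b_{n+1}\to0$ for every real $\lambda$. This is stronger than the usual requirement $\Delta_n|r_n|\to0$: it is a two-dimensional, linear-independence-strength estimate in which the decay of the forms must outrun the growth of the companion coefficients $b_n$. I would read off the relevant geometric rates by applying the saddle-point method to the contour-integral representations of $r_n$ and $\wt r_n$, and I would push the denominator $\Delta_n$ below the naive $\mathrm{lcm}(1,\dots,n)\sim e^{n}$ by deleting the primes whose cancellation is dictated by the ``permutation group'' structure highlighted in the abstract. The thresholds $M=19$ and $M=21$ should emerge as the largest numbers of odd zeta values for which this sharpened arithmetic still secures the inequality, the two collections corresponding to two optimal parameter choices; striking this balance right at the edge of viability is, I expect, the crux of the whole argument.
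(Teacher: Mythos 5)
Your structural starting point is sound and matches the paper's mechanism: one does pair a linear form in odd zeta values with a companion form in powers of $\pi$ sharing the \emph{same} rational coefficients (in the paper, $r_n=\sum_{\nu\ge1}R_n(\nu-\frac12)$ and $r_n'=-\sum_{\nu\ge1}\frac{\d R_n}{\d t}(\nu-\frac12)$, the shared coefficients $a_i$ coming from partial fractions, since $-\frac{\d}{\d t}(t+k)^{-i}=i(t+k)^{-i-1}$). But your elimination step contains a fatal gap. Because you allow both forms to carry their own constants $A_n$ and $\wt A_n$, your combination $\omega_n=r_n-\lambda\wt r_n$ has constant term in $\mathbb Q+\lambda\mathbb Q$, and you are driven to the cross-determinant $a_nb_{n+1}-a_{n+1}b_n$, whose vanishing requires $|\Delta_n\omega_n|\,|b_{n+1}|\to0$. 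This estimate is unattainable, not merely difficult: $|\Delta_n\omega_n|$ decays only like $e^{-\delta n}$ with small $\delta$ (about $0.54$ in the paper's setting $s=40$), whereas $|b_{n+1}|=\Delta_{n+1}|\wt A_{n+1}|$ carries the factor $\Delta_{n+1}\ge d_{n+1}^{s}\sim e^{sn}$, and $|\wt A_{n+1}|$ is itself exponentially large because the constant of a small linear form must nearly cancel its (huge) zeta-coefficient part. The product therefore explodes, and no saddle-point refinement or ``permutation-group'' prime deletion can recover a factor of size $e^{sn}$; the paper uses neither.

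The idea you are missing is a \emph{design} feature that makes the two-dimensional elimination unnecessary: the construction (Lemma~\ref{lem1} plus the derivative trick) arranges that exactly one member of each pair has a constant term. Indeed, $r_n=\sum_{i\,\mathrm{odd}}a_i(2^i-1)\zeta(i)+a_0$ with $d_n^sa_0\in\mathbb Z$, while $r_n'=\sum_{i\,\mathrm{odd}}a_i\,i(2^{i+1}-1)\zeta(i+1)$ has \emph{no} constant at all (symmetrically, $\wh r_n$ is constant-free while $\wh r_n'$ carries $\hat a_0$). Ironically, this is precisely the case $\wt A_n\equiv0$, $b_n\equiv0$ that you dismissed as ``degenerate'' and tried to exclude via a Casoratian. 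Consequently $r_n-(\lambda/\pi)r_n'=a_0+\sum_{m}a_{2m+1}(2^{2m+1}-1)\bigl[\zeta(2m+1)-\lambda c_m\pi^{2m+1}\bigr]$ has a purely rational constant; if all members of the first collection were rational with common denominator $Q$, then $Qd_n^{40}\bigl(r_n-(\lambda/\pi)r_n'\bigr)$ would be an integer, nonzero for large $n$ because $\lim_n|r_n-\mu r_n'|^{1/n}=g(x_0)>0$ (this nonvanishing needs the numerically verified separation $g(x_0')<g(x_0)$, a point your write-up also leaves unaddressed even in your ``easy'' rational-$\lambda$ case), and tending to $0$ because $e^{40}g(x_0)=\exp(40-40.54\dots)<1$. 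That single one-dimensional contradiction works for every real $\lambda$ simultaneously, with no case distinction and no recurrence input; the second collection is handled the same way with $s=42$, the combination $\wh r_n'-4\lambda\pi\,\wh r_n$, and denominators $d_n^{43}$. Finally, the thresholds $19$ and $21$ are not the output of any arithmetic sharpening: they are simply the number of odd indices $i$ in $\{3,\dots,39\}$ for $s=40$, respectively of even indices $i$ in $\{2,\dots,42\}$ for $s=42$, these being choices of $s$ for which the numerical inequalities above hold.
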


We prove this theorem in Section~\ref{s5}.
Notice that
$$
\frac{2^{2m-1}|B_{2m}|}{(2m)!}
=\frac{\zeta(2m)}{\pi^{2m}}\in\mathbb Q \quad\text{for}\; m=1,2,\dotsc.
$$

The only result in the literature we can compare our Theorem~\ref{th-zeta} with is the one given
in \cite[Theorems~3 and 4]{HP06}, which implies the irrationality of at least one number in each collection
$$
\biggl\{\zeta(2m+1)-\lambda\,\frac{2^{2m}|B_{2m}|}{m(2m)!}\,\pi^{2m+1}:m=1,2,\dots,169\biggr\}
$$
and
$$
\biggl\{\zeta(2m+1)-\lambda\,\frac{2^{2m}|B_{2m+2}|}{(m+1)(2m)!}\,\pi^{2m+1}:m=1,2,\dots,169\biggr\},
$$
where $\lambda\in\mathbb R$ is arbitrary.

\section*{Acknowledgement}
We thank Victor Zudilin for beautifully portraying the mathematical constants involved here.
We kindly acknowledge the referee's very attentive reading of the original version.

\section{Catalan's constant}
\label{s2}

\null\hfill\smash{\raisebox{-12mm\relax}%
  {\includegraphics[scale=0.03]{catalan}}}\strut \\[-15mm]

\parshape 5
  0pt 0.8\textwidth
  0pt 0.8\textwidth
  0pt 0.8\textwidth
  0pt 0.8\textwidth
  0pt \textwidth
A long time ago, in joint work with T.~Rivoal \cite{RZ03},
the second author considered very-well-poised hypergeometric series that represent linear forms in Catalan's and related constants.
The approximations to Catalan's constant itself were given by
\begin{align*}
r_n
&=\sum_{t=0}^\infty(2t+n+1)\frac{n!\prod_{j=1}^n(t+1-j)\prod_{j=1}^n(t+n+j)}{\prod_{j=0}^n(t+j+\frac12)^3}\,(-1)^{n+t}
\\
&=\frac{\sqrt\pi\,\Gamma(3n+2)\,\Gamma(n+\frac12)^2\Gamma(n+1)}{4^n\,\Gamma(2n+\frac32)^3}
\\ &\qquad\times
{}_6F_5\biggl[\begin{matrix} 3n+1, \, \frac{3n}2+\frac32, \, n+\frac12, \, n+\frac12, \, n+\frac12, \, n+1 \\[2pt]
\frac{3n}2+\frac12, \, 2n+\frac32, \, 2n+\frac32, \, 2n+\frac32, \, 2n+1 \end{matrix}; -1\biggr].
\end{align*}
The approximations possess different hypergeometric forms, for example,
as a $_3F_2(1)$-series and as a Barnes-type integral as discussed in \cite{Zu02b} and \cite{Zu03}.

The use of partial-fraction decomposition in \cite{Zu02b} suggests considering a different family of approximations:
\begin{align*}
\wt r_n
&=2^{2(n+1)}\sum_{t=1}^\infty(2t-1)\frac{(2n+1)!\prod_{j=0}^{2n-1}(t-n+j)}{\prod_{j=0}^{2n+1}(2t-n-\frac32+j)^2}
\\
&=\frac{2^{2(n+1)}\Gamma(2n+2)^2\Gamma(n+\frac12)^2}{\Gamma(3n+\frac52)^2}
\\ &\qquad\times
{}_6F_5\biggl[\begin{matrix} 2n+1, \, n+\frac32, \, \frac n2+\frac14, \, \frac n2+\frac14, \, \frac n2+\frac34, \, \frac n2+\frac34 \\[2pt]
n+\frac12, \, \frac{3n}2+\frac74, \, \frac{3n}2+\frac74, \, \frac{3n}2+\frac54, \, \frac{3n}2+\frac54 \end{matrix}; 1\biggr].
\end{align*}
This is again a very-well-poised ${}_6F_5$-series, but this time evaluated at 1. In addition, it is reasonably easy to show that
$2^{4n}d_{2n-1}^2\wt r_n\in\mathbb Z+\mathbb Z\,G$, 
where $d_N$ denotes the least common multiple of $1,\dots,N$,
using an argument similar to the one in \cite{Zu02b}.

Amazingly, we have $r_n=\wt r_n$, which accidentally came out of the recursion satisfied by $\wt r_n$.
Our first result is a general identity, of which the equality is a special case
(namely, $c=d=n+\frac12$).

\begin{theorem}
\label{th-cat}
We have
\begin{align}
&
\,{}_6F_5\biggl[\begin{matrix} 3n+1, \, \frac{3n}2+\frac32, \, n+\frac12, \, n+1, \, c, \, d \\[2pt]
\frac{3n}2+\frac12, \, 2n+\frac32, \, 2n+1, \, 3n+2-c, \, 3n+2-d \end{matrix}\,; -1\biggr]
\nonumber\\ &\quad
=\frac{\Gamma(4n+3)\,\Gamma(3n+2-c)\,\Gamma(3n+2-d)\,\Gamma(4n+3-c-d)}{\Gamma(3n+2)\,\Gamma(4n+3-c)\,\Gamma(4n+3-d)\,\Gamma(3n+2-c-d)}
\nonumber\\ &\quad\qquad\times
{}_6F_5\biggl[\begin{matrix} 2n+1, \, n+\frac32, \, \frac c2, \, \frac c2+\frac12, \, \frac d2, \, \frac d2+\frac12 \\[2pt]
n+\frac12, \, 2n+2-\frac c2, \, 2n+\frac32-\frac c2, \, 2n+2-\frac d2, \, 2n+\frac32-\frac d2 \end{matrix}; 1\biggr].
\label{id-cat}
\end{align}
\end{theorem}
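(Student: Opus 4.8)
The plan is to make systematic use of the very-well-poised structure shared by the two series in \eqref{id-cat}. First I would record that the left-hand series is a very-well-poised ${}_6F_5$ at argument $-1$ with leading parameter $a=3n+1$, since $\tfrac{3n}2+\tfrac32=1+\tfrac a2$ and $\tfrac{3n}2+\tfrac12=\tfrac a2$, while each remaining numerator parameter $b$ is paired with the denominator parameter $1+a-b$. Among its four essential numerator parameters $n+\tfrac12,\,n+1,\,c,\,d$, the first two constitute the consecutive pair $\tfrac\beta2,\tfrac{\beta+1}2$ with $\beta=2n+1$. Dually, the right-hand series is a very-well-poised ${}_6F_5$ at argument $1$ with leading parameter $a'=2n+1=\beta$, in which the free parameters $c$ and $d$ now occur doubled, as $\tfrac c2,\tfrac{c+1}2$ and $\tfrac d2,\tfrac{d+1}2$. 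Half-pairs of this kind on both sides are the hallmark of a quadratic transformation, whose engine is the duplication identity $(\tfrac c2)_k(\tfrac{c+1}2)_k=4^{-k}(c)_{2k}$ (and likewise for $d$ and for $\beta$).

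Next I would reduce \eqref{id-cat} to a terminating identity. Setting $d=-N$ with $N$ a nonnegative integer makes the left-hand ${}_6F_5$ terminate through the factor $(d)_k$, and it simultaneously forces the right-hand series to terminate, because one of $\tfrac d2,\tfrac{d+1}2$ becomes a nonpositive integer. The identity then collapses to an equality of two finite sums, equivalently to an identity of rational functions in $c$ and $n$, the Gamma-quotient prefactor degenerating into a ratio of Pochhammer symbols in $N$.

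The terminating identity I would prove by the recurrence method: using creative telescoping (as provided by HYP) one produces a recurrence in $N$ satisfied by the left-hand sum, verifies that the right-hand sum obeys the very same recurrence, and matches the finitely many initial values $N=0,1,\dots$. The structural reason behind such a common recurrence is precisely the duplication mechanism above: splitting the left-hand alternating sum into its even- and odd-index parts and applying $(c)_{2k}=4^k(\tfrac c2)_k(\tfrac{c+1}2)_k$ recasts each part as a very-well-poised series in $k$, and the two parts recombine into the right-hand ${}_6F_5$ at argument $1$. Finally I would pass from $d=0,-1,-2,\dots$ to arbitrary $d$ by analytic continuation: on the region where both very-well-poised series converge --- that is, where the parametric excess has positive real part --- each side of \eqref{id-cat} is analytic in $d$, and after the substitution $d\mapsto-w$ the two sides agree at all nonnegative integers $w$; a standard growth estimate then lets Carlson's theorem force equality throughout, and the same argument in $c$ finishes the proof (cf.\ the analytic-continuation arguments in \cite{GR04}).

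The step I expect to be the main obstacle is the combinatorial heart of the terminating case. Whether one verifies the shared recurrence directly or carries out the even/odd splitting by hand, one must track the very-well-poised factor $\tfrac{a+2k}{a}$ through the re-indexing and reproduce \emph{exactly} the Gamma-quotient prefactor in \eqref{id-cat}; in the splitting approach it is the recombination of the odd-index part that is delicate. By contrast, the reduction to the terminating case and the concluding analytic continuation are routine.
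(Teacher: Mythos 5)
Your strategy is viable, but it takes a genuinely different route from the paper. The paper's proof is a two-step specialisation of a known result: it quotes the $q\to1$ form of Rahman's quadratic transformation \cite[Eq.~(7.8)]{RV93}, lets the truncation parameter of that terminating transformation tend to infinity to obtain a nonterminating quadratic transformation between a very-well-poised ${}_6F_5(-1)$ and a very-well-poised ${}_7F_6(1)$, and then sets $a=2n+1$, $e=n+1$; this choice makes the numerator parameter $e$ equal the denominator parameter $1+a-e=n+1$, so the ${}_7F_6$ collapses to the right-hand side of \eqref{id-cat}. Your plan --- terminate at $d=-N$, certify the finite identity by creative telescoping, then remove the termination by Carlson's theorem --- is rigorous in outline: after rewriting $(\tfrac d2)_k(\tfrac d2+\tfrac12)_k=4^{-k}(d)_{2k}$ and $(2n+2-\tfrac d2)_k(2n+\tfrac32-\tfrac d2)_k=4^{-k}(4n+3-d)_{2k}$, both summands are proper hypergeometric in $(k,N)$ with $c$ and $n$ symbolic, so Zeilberger's algorithm is guaranteed to produce recurrences; a common recurrence plus finitely many (rational) initial values settles the terminating case; and the continuation step does go through, since for $d=-w$ with $\Re w\ge0$ every $d$-dependent quotient such as $(j-w)/(3n+2+j+w)$ has modulus at most $1$, making both sides uniformly bounded on the half-plane once $\Re c<n+1$, after which analyticity in $c$ and $d$ finishes the job. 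What the paper's route buys is brevity, the conceptual identification of \eqref{id-cat} as a specialisation of a known quadratic transformation, and the $q$-analogue recorded right after the theorem, essentially for free; your route is self-contained and needs no knowledge of the Rahman--Verma formula, but it hides the structure in a black-box certificate, and it would not directly deliver the $q$-version (though $q$-Zeilberger plus the identity theorem at the accumulation point $d^{-1}=q^{N}\to0$ would, more easily than Carlson).

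One caveat: your ``structural reason'' should be dropped or reformulated, because as stated it is circular. Splitting the alternating left-hand series into even- and odd-indexed parts and applying duplication to every Pochhammer symbol yields two series with roughly doubled parameter lists that are not very-well-poised, and the claim that these two parts ``recombine'' into the right-hand ${}_6F_5(1)$ is precisely the identity to be proved, not an explanation of it; quadratic transformations at argument $-1$ are not mere series rearrangements. Since your actual proof mechanism is the recurrence-plus-continuation argument, this does not create a gap, but it should not be presented as the reason the common recurrence exists.
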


\begin{proof}
We start with Rahman's quadratic transformation \cite[Eq.~(7.8), $q\to 1$, reversed]{RV93}
\begin{align*}
&
{}_8F_7\biggl[\begin{matrix}
2 a - e, \, 1 + a - \frac{e}{2}, \, \frac{1}{2} + a - e, \, c, \, d, \, e, \\[1.5pt]
a - \frac{e}{2}, \, \frac{1}{2} + a, \, 1 + 2 a - c - e, \, 1 + 2 a - d - e, \, 1 + 2 a - 2 e,
\end{matrix}
\\ &\qquad\qquad\qquad\qquad\qquad\qquad
\begin{matrix}
1 + 4 a - c - d - e + n, \, -n \\[1.5pt]
-2 a + c + d - n, \, 1 + 2 a - e + n
\end{matrix}\, ; 1\biggr]
\displaybreak[2]\\ &\quad
=\frac{(1 + 2 a - c)_n\, (1 + 2 a - d)_n\, (1 + 2 a - e)_n\, (1 + 2 a - c - d - e)_n}
{(1 + 2 a)_n\, (1 + 2 a - c - d)_n\, (1 + 2 a - c - e)_n\, (1 + 2 a - d - e)_n}
\\ &\quad\qquad \times
{}_{11}F_{10}\biggl[\begin{matrix}
a, \, 1 + \frac{a}{2}, \, e, \, \frac{c}{2}, \, \frac{1}{2} + \frac{c}{2}, \, \frac{d}{2}, \, \frac{1}{2} + \frac{d}{2}, \\[1.5pt]
\frac{a}{2}, \, 1 + a - e, \, 1 + a - \frac{c}{2}, \, \frac{1}{2} + a - \frac{c}{2}, \, 1 + a - \frac{d}{2}, \, \frac{1}{2} + a - \frac{d}{2},
\end{matrix}
\\ &\qquad\qquad\qquad
\begin{matrix}
\frac{1}{2} + 2 a - \frac{c}{2} - \frac{d}{2} - \frac{e}{2} + \frac{n}{2}, \,
1 + 2 a - \frac{c}{2} - \frac{d}{2} - \frac{e}{2} + \frac{n}{2}, \, \frac{1}{2} - \frac{n}{2}, \, -\frac{n}{2} \\[1.5pt]
\frac{1}{2} - a + \frac{c}{2} + \frac{d}{2} + \frac{e}{2} - \frac{n}{2}, \,
- a + \frac{c}{2} + \frac{d}{2} + \frac{e}{2} - \frac{n}{2}, \, \frac{1}{2} + a + \frac{n}{2}, \, 1 + a + \frac{n}{2}
\end{matrix} ; 1\biggr],
\end{align*}
in which we let $n$ tend to $\infty$:
\begin{align*}
&
{}_6F_5\biggl[\begin{matrix}
2 a - e, \, 1 + a - \frac{e}{2}, \, \frac{1}{2} + a - e, \, c, \, d, \, e \\[1.5pt]
a - \frac{e}{2}, \, \frac{1}{2} + a, \, 1 + 2 a - c - e, \, 1 + 2 a - d - e, \, 1 + 2 a - 2 e
\end{matrix}\, ; -1\biggr]
\\ &\quad
=\frac{\Ga(1 + 2 a) \, \Ga(1 + 2 a - c - d) \, \Ga(1 + 2 a - c - e) \, \Ga(1 + 2 a - d - e)}
{\Ga(1 + 2 a - c) \, \Ga(1 + 2 a - d) \, \Ga(1 + 2 a - e) \, \Ga(1 + 2 a - c - d - e)}
\\ &\quad\qquad \times
{}_7F_6\biggl[\begin{matrix}
a, \, 1 + \frac{a}{2}, \, e, \, \frac{c}{2}, \, \frac{1}{2} + \frac{c}{2}, \, \frac{d}{2}, \, \frac{1}{2} + \frac{d}{2} \\[1.5pt]
\frac{a}{2}, \, 1 + a - e, \, 1 + a - \frac{c}{2}, \, \frac{1}{2} + a - \frac{c}{2}, \, 1 + a - \frac{d}{2}, \, \frac{1}{2} + a - \frac{d}{2}
\end{matrix} ; 1\biggr].
\end{align*}
Now set $a=2n+1$ and $e=n+1$ to deduce \eqref{id-cat}.
\end{proof}

The corresponding $q$-version, which we record here for completeness, reads
\begin{align*}
&
\,{}_8\phi_7\biggl[\begin{matrix} q^{3n+1}, \, q^{\frac{3n}2+\frac32}, \, -q^{\frac{3n}2+\frac32}, \, q^{n+\frac12}, \, -q^{n+\frac12}, \, q^{n+1}, \, c, \, d \\[2pt]
q^{\frac{3n}2+\frac12}, \, -q^{\frac{3n}2+\frac12}, \, q^{2n+\frac32}, \, -q^{2n+\frac32}, \, q^{2n+1}, \, q^{3n+2}/c, \, q^{3n+2}/d \end{matrix}\,; q,-\frac{q^{3n+2}}{cd}\biggr]
\\ &\quad
=\frac{(q^{3n+2},q^{4n+3}/c,q^{4n+3}/d,q^{3n+2}/cd;q)_\infty}{(q^{4n+3},q^{3n+2}/c,q^{3n+2}/d,q^{4n+3}/cd;q)_\infty}
\\ &\quad\qquad\times
{}_7\phi_6\biggl[\begin{matrix} q^{4n+2}, \, q^{2n+3}, \, -q^{2n+3}, \, c, \, cq, \, d, \, dq \\[2pt]
q^{2n+1}, \, -q^{2n+1}, \, q^{4n+4}/c, \, q^{4n+3}/c, \, q^{4n+4}/d, \, q^{4n+3}/d \end{matrix}\,; q^2,\frac{q^{6n+4}}{c^2d^2}\biggr].
\end{align*}

\section{Logarithm of $2$}
\label{s3}

\null\hfill\smash{\raisebox{-12mm\relax}%
  {\includegraphics[scale=0.03]{log2}}}\strut \\[-15mm]

\parshape 5
  0pt 0.8\textwidth
  0pt 0.8\textwidth
  0pt 0.8\textwidth
  0pt 0.8\textwidth
  0pt \textwidth
Another strange identity is related to the classical rational approximations to $\log2$:
\begin{align*}
r_n
&=(-1)^{n+1}\sum_{t=0}^\infty\frac{\prod_{j=1}^n(t-j)}{\prod_{j=0}^n(t+j)}\,(-1)^t
\\
&=\frac{\Gamma(n+1)^2}{\Gamma(2n+2)}\,{}_2F_1\biggl[\begin{matrix} n+1, \, n+1 \\ 2n+2 \end{matrix}; -1\biggr]
\\
&=\int_0^1\frac{x^n(1-x)^n}{(1+x)^{n+1}}\,\d x.
\end{align*}
The sequence satisfies the recurrence equation $(n+1)r_{n+1}-3(2n+1)r_n+nr_{n-1}=0$, and with the help of the latter we find out that
$r_n=\wt r_n$ for
\begin{align*}
\wt r_n
&=\sum_{t=0}^\infty\frac{(2n+1)!\,\prod_{j=1}^n(t-j)}{n!\,\prod_{j=0}^{2n+1}(2t-n-1+j)}
\\
&=\frac{\Gamma(n+1)\,\Gamma(2n+2)}{\Gamma(3n+3)}\,{}_3F_2\biggl[\begin{matrix} n+1, \, \frac n2+\frac12, \, \frac n2+1 \\[2pt]
\frac{3n}2+2, \, \frac{3n}2+\frac32 \end{matrix}; 1\biggr].
\end{align*}
The finding is a particular case of another general identity.

\begin{theorem}
\label{th-ln2}
We have
\begin{equation}
{}_2F_1\biggl[\begin{matrix} x, \, 2a \\ 2b-x \end{matrix}; -1\biggr]
=\frac{\Gamma(2b-x)\,\Gamma(2b-2a)}{\Gamma(2b)\,\Gamma(2b-2a-x)}
\,{}_3F_2\biggl[\begin{matrix} x, \, a, \, a+\frac12 \\[2pt] b, \, b+\frac12 \end{matrix}; 1\biggr].
\label{id-ln2}
\end{equation}
\end{theorem}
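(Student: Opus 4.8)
The plan is to prove the $_2F_1(-1)$ to $_3F_2(1)$ transformation \eqref{id-ln2} by reducing it to the quadratic transformation machinery already invoked for Theorem~\ref{th-cat}, but now in a genuinely terminating or low-parameter setting where a classical identity applies cleanly. My first move would be to recognise the structural signature of the right-hand side: the triple of numerator parameters $a,\,a+\tfrac12$ together with the denominator pair $b,\,b+\tfrac12$ is exactly the fingerprint left behind when a quadratic (argument-doubling) transformation acts on a $_2F_1$ at $-1$. This strongly suggests that \eqref{id-ln2} is not an isolated curiosity but a degenerate case of the same Rahman-type quadratic transformation used above, or equivalently a consequence of Kummer's classical quadratic transformations relating series in $z$ and $4z/(1+z)^2$.

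Concretely, I would attempt to derive \eqref{id-ln2} directly from a Kummer-type quadratic transformation for the $_2F_1$ evaluated at argument $-1$. The relevant classical statement transforms
$$
{}_2F_1\!\left[\begin{matrix} \alpha,\,\beta\\ \alpha+\beta+\tfrac12\end{matrix};z\right]
\quad\text{into a series in}\quad 4z(1-z),
$$
and there is a companion family that sends a $_2F_1$ with argument $-1$ into a $_3F_2$ (or a better-poised $_2F_1$) at $1$ by the substitution $z\mapsto -1$, which collapses $4z/(1+z)^2$ and similar arguments to controlled values. The key algebraic step is to match parameters: I would set the upper parameters of the left-hand $_2F_1$ to $x$ and $2a$ and the lower to $2b-x$, then check that the quadratic transformation's output produces precisely the numerator data $x,\,a,\,a+\tfrac12$ against denominators $b,\,b+\tfrac12$, with the Gamma-quotient prefactor
$$
\frac{\Gamma(2b-x)\,\Gamma(2b-2a)}{\Gamma(2b)\,\Gamma(2b-2a-x)}
$$
emerging from the Gauss/Kummer evaluation of the auxiliary factor.

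An alternative and perhaps cleaner route mirrors the proof of Theorem~\ref{th-cat} exactly: start from Rahman's quadratic transformation \cite[Eq.~(7.8)]{RV93}, take the same $q\to1$ and $n\to\infty$ limits to obtain the displayed $_6F_5(-1)$ to $_7F_6(1)$ identity, and then \emph{specialise the free parameters} so that the very-well-poised $_6F_5$ on the left degenerates to an ordinary $_2F_1(-1)$ and the $_7F_6$ on the right degenerates to the $_3F_2(1)$ appearing in \eqref{id-ln2}. The degeneration would be forced by choosing the very-well-poised parameter $a$ and the auxiliary $e$ so that two numerator and denominator parameters coincide and cancel, stripping the well-poised pairs down to the plain ratio of Pochhammer products that defines a $_2F_1$. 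This is appealing because it exhibits \eqref{id-ln2} and \eqref{id-cat} as two specialisations of one master identity, which is exactly the ``permutation group structure'' flavour advertised in the abstract.

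The main obstacle I anticipate is the parameter bookkeeping in this degeneration: ensuring that the cancellations on both sides are \emph{simultaneous} and \emph{consistent}, so that the same substitution that kills the extra well-poised pairs on the left also reduces the right-hand $_7F_6$ to the required $_3F_2$ without spawning a spurious factor or an indeterminate $0/0$ in the Gamma prefactor. In practice one must verify that the limiting/specialising values do not collide with poles of the Gamma functions and that the resulting prefactor simplifies to the stated quotient; this is where a careful (and likely HYP-assisted) tracking of every Pochhammer symbol is essential. Should the direct degeneration prove too delicate, the safe fallback is the self-contained Kummer-transformation derivation of the previous paragraph, or simply verifying that both sides of \eqref{id-ln2}, as functions of $x$, satisfy the same first-order contiguous relation and agree at $x=0$, which pins down the identity uniquely.
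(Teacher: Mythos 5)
None of the three routes you sketch actually reaches \eqref{id-ln2}, and the two that are concrete enough to test would provably fail. The key structural point you miss is that the left-hand side ${}_2F_1\bigl[x,\,2a;\,2b-x;\,-1\bigr]$ is a \emph{nearly-poised} series (the pair $x$, $2b-x$ has constant sum $2b$, while $2a$ is unconstrained), and the classical tool for such series is not a Gauss--Kummer quadratic transformation: those exist only under a parameter constraint (such as lower parameter $=\alpha+\beta+\tfrac12$, or $\beta=\alpha+\tfrac12$) which the left-hand side does not satisfy, and they transform a ${}_2F_1$ into another ${}_2F_1$, never into a ${}_3F_2$. What is needed, and what the paper's one-line proof quotes, is Whipple's transformation of nearly-poised series, \cite[Sec.~4.6, Eq.~(3)]{Ba35}, specialised by $b=\kappa-a$ and reparametrised. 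If you want a self-contained argument instead of a citation, the clean route is elementary: by the duplication formula, $(a)_k(a+\tfrac12)_k/\bigl((b)_k(b+\tfrac12)_k\bigr)=(2a)_{2k}/(2b)_{2k}$, so the right-hand series equals $\sum_{k\ge0}(x)_k(2a)_{2k}/\bigl((2b)_{2k}\,k!\bigr)$; writing $(2a)_{2k}/(2b)_{2k}$ as a beta integral, summing $\sum_k(x)_k t^{2k}/k!=(1-t^2)^{-x}$ under the integral sign, and recognising Euler's integral for ${}_2F_1\bigl[x,2a;2b-x;-1\bigr]$ (note $(1-t)^{2b-2a-1}(1-t^2)^{-x}=(1-t)^{2b-2a-x-1}(1+t)^{-x}$) produces exactly the stated Gamma prefactor. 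Your proposal never identifies either of these tools, so it is a plan rather than a proof.

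Your second route --- specialising the ${}_6F_5(-1)={}_7F_6(1)$ limit of Rahman's transformation used for Theorem~\ref{th-cat} --- is blocked by a parameter count. That master identity has four free parameters $a,c,d,e$, while \eqref{id-ln2} has three, so any admissible degeneration may impose only \emph{one} linear constraint. But its left-hand side is a very-well-poised ${}_6F_5(-1)$, and collapsing it to a ${}_2F_1(-1)$ requires four upper--lower parameter coincidences; a direct check of all possible coincidences shows that no single linear relation among $a,c,d,e$ yields more than two of them (similarly, on the ${}_7F_6(1)$ side the only single relation yielding the needed four coincidences, namely $c+d=1+2a$, leaves the very-well-poised series ${}_3F_2\bigl[a,\,1+\tfrac a2,\,e;\,\tfrac a2,\,1+a-e;\,1\bigr]$, not one of the shape $\bigl[x,\,a,\,a+\tfrac12;\,b,\,b+\tfrac12\bigr]$). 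Hence any simultaneous degeneration costs at least two independent constraints and produces at most a two-parameter identity --- in the cases one can write down, with a well-poised, Kummer-summable ${}_2F_1(-1)$ --- never the three-parameter identity \eqref{id-ln2}. Finally, your fallback rests on a false premise: neither side of \eqref{id-ln2} satisfies a two-term (``first-order'') contiguous relation in $x$ with coefficients rational in $x,a,b$. If it did, then ${}_2F_1\bigl[x,2a;2b-x;-1\bigr]$ would take rational values at all positive integers $x$ for rational $a,b$, whereas at $x=1$, $a=1$, $b=2$ it equals ${}_2F_1[1,2;3;-1]=2-2\log2$. A recurrence-based proof would instead need a second-order (Zeilberger-type) recurrence, two initial values, and a Carlson-type argument to pass from integer to complex $x$; none of this is supplied in your sketch.
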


\begin{proof}
This is a specialisation of a transformation of 
Whipple \cite[Sec.~4.6, Eq.~(3)]{Ba35}: set $b=\kappa-a$ there
and reparametrise.
\end{proof}

A companion $q$-version is
\begin{multline*}
{}_6\phi_7\biggl[\begin{matrix} -b/q, \, \sqrt{-bq}, \, -\sqrt{-bq}, \, x, \, -x, \, a \\
\sqrt{-b/q}, \, \sqrt{-b/q}, \, -b/x, \, b/x, \, -b/a, \, 0, \, 0 \end{matrix}\,; q,-\frac{b^2}{ax^2}\biggr]
\\
=\frac{(b^2,b^2/(ax)^2;q^2)_\infty}{(b^2/a^2,b^2/x^2;q^2)_\infty}
\,{}_3\phi_2\biggl[\begin{matrix} x^2, \, a, \, aq \\ b, \, bq \end{matrix}; q^2,\frac{b^2}{a^2x^2}\biggr],
\end{multline*}
which follows from \cite[Eq.~(3.10.4)]{GR04}.

To clarify the arithmetic situation behind the right-hand side of \eqref{id-ln2}, we notice that there is a permutation group for it
used for producing a sharp irrationality measure of $\zeta(2)$ in \cite{RV96}. As explained in \cite[Section~6]{Zu04},
a realisation of the group for a generic hypergeometric function
\begin{equation}
\frac{\Gamma(a_2)\,\Gamma(b_2-a_2)\,\Gamma(a_3)\,\Gamma(b_3-a_3)}{\Gamma(b_2)\,\Gamma(b_3)}
\,{}_3F_2\biggl[\begin{matrix} a_1, \, a_2, \, a_3 \\ b_2, \, b_3 \end{matrix}; 1\biggr]
\label{3F2}
\end{equation}
can be given by means of the ten parameters
\begin{align*}
c_{00}&=(b_2+b_3)-(a_1+a_2+a_3)-1,
\\
c_{jk}&=\begin{cases}
a_j-1, & \text{for }k=1, \\
b_k-a_j-1, & \text{for } k=2,3,
\end{cases}
\end{align*}
as follows. If the set of parameters is represented in the matrix form
\begin{equation}
\bc=\begin{pmatrix} c_{00} \\ & c_{11} & c_{12} & c_{13} \\ & c_{21} & c_{22} & c_{23} \\ & c_{31} & c_{32} & c_{33} \end{pmatrix},
\label{bc}
\end{equation}
and $H(\bc)$ denotes the corresponding hypergeometric function in \eqref{3F2}, then the quantity
\begin{equation}
\frac{H(\bc)}{\Gamma(c_{00}+1)\,\Gamma(c_{21}+1)\,\Gamma(c_{31}+1)\,\Gamma(c_{22}+1)\,\Gamma(c_{33}+1)}
\label{inv-H}
\end{equation}
is invariant under the group $\fG$ (of order 120) generated by the four involutions
\begin{alignat*}{2}
\fa_1&=(c_{11} \; c_{21})\,(c_{12} \; c_{22})\,(c_{13} \; c_{23}), &
\fa_2&=(c_{21} \; c_{31})\,(c_{22} \; c_{32})\,(c_{23} \; c_{33}), \\
\fb&=(c_{12} \; c_{13})\,(c_{22} \; c_{23})\,(c_{32} \; c_{33}), &\quad\text{and}\quad
\fh&=(c_{00} \; c_{22})\,(c_{11} \; c_{33})\,(c_{13} \; c_{31}).
\end{alignat*}
Notice that the permutations $\fa_1$, $\fa_2$, and $\fb$ correspond to the rearrangements $a_1\leftrightarrow a_2$, $a_2\leftrightarrow a_3$,
and $b_2\leftrightarrow b_3$, respectively, of the function \eqref{3F2}, so that the invariance of \eqref{inv-H} under their action
is trivial. It is only the permutation $\fh$, underlying Thomae's 
transformation
\cite[Sec.~3.2, Eq.~(1)]{Ba35} and Whipple's 
transformation \cite[Sec.~4.4, Eq.~(2)]{Ba35},
that makes the action of the group on \eqref{inv-H} non-trivial.

With the method in \cite[Section 3.3]{FZ10}, if
\begin{equation}
a_1,a_2,b_2\in\mathbb Z \quad\text{and}\quad a_3,b_3\in\mathbb Z+\tfrac12
\label{cond}
\end{equation}
are chosen such that $c_{jk}\ge-\frac12$ for all~$j$ and~$k$, 
then the quantity $H(\bc)$ representing~\eqref{3F2} satisfies
$$
H(\bc)\in\mathbb Q\log2+\mathbb Q.
$$
It is a tough task to produce a sharp integer $D(\bc)$ such that $D(\bc)H(\bc)\in\mathbb Z\log2+\mathbb Z$ in the general case;
it can be given in the particular situation
where $a_3-a_2=b_3-b_2=\pm\frac12$ with the help of \eqref{id-ln2} and
the known information for 
the corresponding ${}_2F_1(1)$-series.

Observe that the group $\fG=\langle\fa_1,\,\fa_2,\,\fb,\,\fh\rangle$ cannot be arithmetically used in its full force when
the parameters of \eqref{3F2} are subject to \eqref{cond}. However, apart from the initial representative \eqref{bc},
there are five more with the constraint that entries $1\,3$, $2\,3$, $3\,1$ and $3\,2$ are from $\mathbb Z+\frac12$, namely
\begin{equation}
\begin{gathered}
\begin{pmatrix} c_{22} \\ & c_{33} & c_{12} & c_{31} \\ & c_{21} & c_{00} & c_{23} \\ & c_{13} & c_{32} & c_{11} \end{pmatrix},
\quad
\begin{pmatrix} c_{12} \\ & c_{11} & c_{00} & c_{13} \\ & c_{33} & c_{22} & c_{31} \\ & c_{23} & c_{32} & c_{21} \end{pmatrix},
\quad
\begin{pmatrix} c_{33} \\ & c_{22} & c_{21} & c_{13} \\ & c_{12} & c_{11} & c_{23} \\ & c_{31} & c_{32} & c_{00} \end{pmatrix},
\\
\begin{pmatrix} c_{11} \\ & c_{00} & c_{21} & c_{31} \\ & c_{12} & c_{33} & c_{23} \\ & c_{13} & c_{32} & c_{22} \end{pmatrix},
\quad\text{and}\quad
\begin{pmatrix} c_{21} \\ & c_{22} & c_{33} & c_{13} \\ & c_{00} & c_{11} & c_{31} \\ & c_{23} & c_{32} & c_{12} \end{pmatrix},
\end{gathered}
\label{5bc}
\end{equation}
and another six which are obtained from \eqref{bc} and \eqref{5bc} by further action of $\fa_1$.

Remarkably enough, the choices $x=12n+1$, $a=14n+1$, $b=28n+2$ and $x=14n+1$, $a=12n+1$, $b=28n+2$ in \eqref{id-ln2}, which originate from the trivial transformation
of the $_2F_1(-1)$-side and which correspond to an early 
(`pre-Raffaele'~\cite{Ma09})
irrationality measure record \cite{Ha90,Ru87,Vi97,Zu04b}, produce $\fG$-disjoint collections
$$
{\left(
\begin{smallmatrix} 14n+1 \\ & 12n+1 & 8n+1 & 8n+\frac12 \\[1pt] & 7n+1 & 13n+1 & 13n+\frac12 \\[1pt] & 7n+\frac12 & 13n+\frac12 & 13n+1 \end{smallmatrix}\right)
}
\quad\text{and}\quad
{\left(
\begin{smallmatrix} 16n+1 \\ & 14n+1 & 7n+1 & 7n+\frac12 \\[1pt] & 6n+1 & 15n+1 & 15n+\frac12 \\[1pt] & 6n+\frac12 & 15n+\frac12 & 15n+1 \end{smallmatrix}\right)
}
$$
on the $_3F_2(1)$-side.

\section(\003\300 squared){$\pi$ squared}
\label{s4}

\null\hfill\smash{\raisebox{-12mm\relax}%
  {\includegraphics[scale=0.03]{pi-squared}}}\strut \\[-15mm]

\parshape 5
  0pt 0.8\textwidth
  0pt 0.8\textwidth
  0pt 0.8\textwidth
  0pt 0.8\textwidth
  0pt \textwidth
Our next hypergeometric entry \emph{a priori} produces linear forms not only in $1$ and $\zeta(2)=\pi^2/6$
but also in $\zeta(4)=\pi^4/90$, with rational coefficients.
It originates from the well-poised hypergeometric series
\begin{align*}
r_n
&=\sum_{t=1}^\infty\frac{2^{8n}n!^4(2n)!^2\prod_{j=0}^{4n-1}(t-n+j)}{(4n)!\,\prod_{j=0}^{2n}(t-\frac12+j)^4}
\\
&=\frac{\pi^2\Gamma(2n+1)^6}{\Gamma(3n+\frac32)^4}
\,{}_5F_4\biggl[\begin{matrix} 4n+1, \, n+\frac12, \, n+\frac12, \, n+\frac12, \, n+\frac12 \\[2pt]
3n+\frac32, \, 3n+\frac32, \, 3n+\frac32, \, 3n+\frac32 \end{matrix}; 1\biggr].
\end{align*}
It is standard to sum the rational function
$$
R_n(t)=\frac{2^{8n}n!^4(2n)!^2\prod_{j=0}^{4n-1}(t-n+j)}{(4n)!\,\prod_{j=0}^{2n}(t-\frac12+j)^4}
$$
by expanding it into the sum of partial fractions; the well-poised symmetry $R_n(t)=R_n(2n-1-t)$ (and the residue sum theorem) imply then that
$$
r_n\in\mathbb Q\pi^4+\mathbb Q\pi^2+\mathbb Q
$$
for $n=0,1,2,\dots$\,. At the same time,
$$
r_0=\frac16\,\pi^4,
\quad
r_1=\frac{19}{6}\,\pi^4-\frac{125}{4}\,\pi^2,
$$
and the sequence $r_n$ satisfies a second order recurrence equation, so that
$r_n=a_n\pi^4-b_n\pi^2\in\mathbb Q\pi^4+\mathbb Q\pi^2$ for all $n$.
This happens because the function $R_n(t)$ vanishes at $t=1,0,-1,\dots,-n+2$ so that
$$
r_n
=\sum_{t=-n+1}^\infty R_n(t),
$$
and in view of the following result.

\begin{lemma}
\label{lem1}
Assume that a rational function
$$
R(t)=\sum_{i=1}^s\sum_{k=0}^n\frac{a_{i,k}}{(t+k)^i}
$$
satisfies $R(t)=R(-n-t)$. Put $m=\lfloor(n-1)/2\rfloor$.
Then $a_{i,n-k}=(-1)^ia_{i,k}$ and
\begin{align*}
\sum_{t=-m}^\infty R(t-\tfrac12)
&=\sum_{\substack{i=2\\i\;\text{\em even}}}^sa_i\sum_{\ell=1}^\infty\frac1{(\ell-\frac12)^i}+a_0
\\
&=\sum_{\substack{i=2\\i\;\text{\em even}}}^sa_i(2^i-1)\zeta(i)+a_0
\in\mathbb Q+\mathbb Q\,\pi^2+\mathbb Q\,\pi^4+\dots+\mathbb Q\,\pi^{2\lfloor s/2\rfloor},
\end{align*}
where
$$
a_i=\sum_{k=0}^na_{i,k}, \quad\text{for }i=2,\dots,s,
\quad\text{and}\quad
a_0=\begin{cases}
0, &\text{for $n$ even}, \\
\tfrac12\,R(-m-\tfrac12), &\text{for $n$ odd}.
\end{cases}
$$
\end{lemma}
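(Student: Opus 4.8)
The plan is to treat the two assertions separately: first the coefficient symmetry $a_{i,n-k}=(-1)^ia_{i,k}$, and then the evaluation of the sum. For the symmetry I would substitute $t\mapsto-n-t$ into the given partial-fraction expansion and read off the coefficients. Since $(-n-t+k)^i=(-1)^i(t+(n-k))^i$, the expansion of $R(-n-t)$ equals $\sum_{i,k}(-1)^ia_{i,k}(t+(n-k))^{-i}$; reindexing $k\mapsto n-k$ and invoking the uniqueness of the partial-fraction decomposition of $R(t)=R(-n-t)$ yields $a_{i,k}=(-1)^ia_{i,n-k}$ at once. In particular the coefficients $a_{1,k}$ are antisymmetric about $k=n/2$, whence $a_1=\sum_k a_{1,k}=0$; this vanishing is exactly what will make the otherwise divergent $i=1$ contribution harmless below.

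For the evaluation I would first record that the shifted function $g(t):=R(t-\tfrac12)$ inherits a reflection symmetry from $R$: the relation $R(-n-u)=R(u)$ with $u=t-\tfrac12$ gives $g(-(n-1)-t)=g(t)$, so $g$ is symmetric about the point $c=-(n-1)/2$. Its only poles sit at the half-integers $t=\tfrac12-k$, so $g$ is finite at every integer and decays like $t^{-2}$ at infinity (again using $a_1=0$), and hence the bilateral sum $T:=\sum_{t\in\mathbb Z}g(t)$ converges. The role of the lower limit $m=\lfloor(n-1)/2\rfloor$ is to make $\sum_{t=-m}^\infty g(t)$ exactly half of $T$. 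I would treat the two parities separately: when $n$ is even, $c$ is a half-integer and $-m$ is the least integer exceeding $c$, so the reflection $t\mapsto-(n-1)-t$ pairs $\{t\ge-m\}$ bijectively with $\{t\le-m-1\}$ and $S=T/2$ with $a_0=0$; when $n$ is odd, $c=-m$ is an integer fixed by the reflection, the same pairing gives $\sum_{t\ge-m+1}g(t)=(T-g(-m))/2$, and adding back the central term yields $S=T/2+\tfrac12 g(-m)=T/2+\tfrac12 R(-m-\tfrac12)$. In both cases $S=T/2+a_0$.

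It then remains to compute $T$. Writing $g(t)=\sum_{i,k}a_{i,k}(t+k-\tfrac12)^{-i}$ and substituting $\ell=t+k$ turns $T$ into $\sum_i a_i\sum_{\ell\in\mathbb Z}(\ell-\tfrac12)^{-i}$, where $a_i=\sum_k a_{i,k}$. The bilateral sums $\sum_{\ell\in\mathbb Z}(\ell-\tfrac12)^{-i}$ are evaluated by the involution $\ell\mapsto1-\ell$, which sends $\ell-\tfrac12\mapsto-(\ell-\tfrac12)$: for odd $i$ the terms cancel in pairs and the sum is $0$ (interpreted as a principal value when $i=1$), while for even $i\ge2$ the sum equals $2\sum_{\ell\ge1}(\ell-\tfrac12)^{-i}=2(2^i-1)\zeta(i)$, the last step being the split of $\sum_j j^{-i}$ into odd and even $j$. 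Hence $T=2\sum_{i\,\text{even},\,i\ge2}a_i(2^i-1)\zeta(i)$, and $S=T/2+a_0$ gives precisely the asserted formula; the membership in $\mathbb Q+\mathbb Q\pi^2+\dots+\mathbb Q\pi^{2\lfloor s/2\rfloor}$ follows since $\zeta(2j)\in\mathbb Q\,\pi^{2j}$.

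The one genuinely delicate point is the legitimacy of the interchange of summation that produces the factorization $T=\sum_i a_i\sum_\ell(\ell-\tfrac12)^{-i}$, since the individual $i=1$ inner sum diverges. The even and odd $i\ge2$ pieces are absolutely convergent and cause no trouble; for $i=1$ I would argue through symmetric partial sums $\sum_{t=-N}^{N}$, using $\sum_{t=-N}^N(t+k-\tfrac12)^{-1}\to\pi\cot(\pi(k-\tfrac12))=0$ for each fixed $k$, so that the $i=1$ contribution to $T$ vanishes. The vanishing $a_1=0$ from the first claim is what guarantees that the combined $i=1$ summand is $O(t^{-2})$, so that this limiting procedure is valid and the folding of the sum in the previous step applies to the full $g$. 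The careful matching of the lower limit $-m$ with the reflection centre in each parity is the other bookkeeping step that has to be done with care.
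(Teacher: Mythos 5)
Your proof is correct, but it takes a genuinely different route from the paper's. The paper never leaves the one-sided sum: it expands $\sum_{t=-m}^\infty R(t-\tfrac12)$ into partial fractions, shifts each inner sum to $\sum_{\ell=1}^\infty(\ell-\tfrac12)^{-i}$ plus \emph{finite} boundary sums, calls the aggregate of those boundary sums $a_0$, and then pins down $a_0$ purely algebraically: applying the coefficient symmetry $a_{i,n-k}=(-1)^ia_{i,k}$ together with the reindexing $k'=n-k$ to the boundary sums yields $a_0=-a_0$ when $n$ is even and $a_0=-a_0+R(-m-\tfrac12)$ when $n$ is odd. You instead pass to the bilateral sum $T=\sum_{t\in\mathbb Z}R(t-\tfrac12)$ and fold it using the reflection symmetry of $g(t)=R(t-\tfrac12)$ about the centre $-(n-1)/2$, so that the parity dichotomy and the value of $a_0$ appear geometrically (half the central term when the centre is an integer, nothing when it is a half-integer), and you then evaluate $T$ by the involution $\ell\mapsto1-\ell$. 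What your route buys: the formula for $a_0$ is explained rather than verified, and your treatment of the $i=1$ terms---symmetric partial sums plus the principal-value evaluation $\sum_{t=-N}^{N}(t+k-\tfrac12)^{-1}\to\pi\cot(\pi(k-\tfrac12))=0$---is in fact more careful than the paper's, which tacitly drops the divergent $i=1$ inner sums on the grounds that $a_1=0$. What the paper's route buys: it needs no bilateral sums and no principal values; after the initial expansion, every step is a finite algebraic identity among boundary sums, which keeps the argument entirely within the formal partial-fraction framework. Both arguments ultimately rest on the same two facts, namely the coefficient symmetry and the consequent vanishing $a_i=0$ for odd $i$ (in particular $a_1=0$, which is what makes all the sums involved converge).
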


\begin{proof}
The property $a_{i,n-k}=(-1)^ia_{i,k}$ is straightforward
to see from $R(t)=R(-n-\nobreak t)$. Furthermore, we have
\begin{align*}
\sum_{t=-m}^\infty R(t-\tfrac12)
&=\sum_{i=1}^s\sum_{k=0}^na_{i,k}\sum_{t=-m}^\infty\frac1{(t+k-\frac12)^i}
=\sum_{i=1}^s\sum_{k=0}^na_{i,k}\sum_{\ell=k-m}^\infty\frac1{(\ell-\frac12)^i}
\\
&=\sum_{i=1}^s\sum_{k=0}^na_{i,k}\cdot\sum_{\ell=1}^\infty\frac1{(\ell-\frac12)^i}
\\ &\qquad
+\sum_{i=1}^s\biggl(\sum_{k=0}^ma_{i,k}\sum_{\ell=k-m}^0\frac1{(\ell-\frac12)^i}
-\sum_{k=m+1}^na_{i,k}\sum_{\ell=1}^{k-m-1}\frac1{(\ell-\frac12)^i}\biggr)
\\
&=\sum_{\substack{i=2\\i\;\text{even}}}^sa_i\sum_{\ell=1}^\infty\frac1{(\ell-\frac12)^i}
+a_0,
\end{align*}
with the constant term equal to
\begin{align*}
a_0
&=\sum_{i=1}^s\biggl(\sum_{k=0}^ma_{i,k}\sum_{\ell=k-m}^0\frac1{(\ell-\frac12)^i}
-\sum_{k=m+1}^na_{i,k}\sum_{\ell=1}^{k-m-1}\frac1{(\ell-\frac12)^i}\biggr)
\\
&=\sum_{i=1}^s(-1)^i\biggl(\sum_{k=0}^ma_{i,n-k}\sum_{\ell=k-m}^0\frac1{(\ell-\frac12)^i}
-\sum_{k=m+1}^na_{i,n-k}\sum_{\ell=1}^{k-m-1}\frac1{(\ell-\frac12)^i}\biggr)
\\ \intertext{(take $k'=n-k$)}
&=\sum_{i=1}^s(-1)^i\biggl(\sum_{k'=n-m}^na_{i,k'}\sum_{\ell=n-m-k'}^0\frac1{(\ell-\frac12)^i}
-\sum_{k'=0}^{n-m-1}a_{i,k'}\sum_{\ell=1}^{n-m-1-k'}\frac1{(\ell-\frac12)^i}\biggr).
\end{align*}
If $n$ is odd, then $n-m=m+1$ and
\begin{align*}
a_0
&=\sum_{i=1}^s(-1)^i\biggl(\sum_{k'=m+1}^na_{i,k'}\sum_{\ell=m+1-k'}^0\frac1{(\ell-\frac12)^i}
-\sum_{k'=0}^ma_{i,k'}\sum_{\ell=1}^{m-k'}\frac1{(\ell-\frac12)^i}\biggr)
\\
&=\sum_{i=1}^s\biggl(\sum_{k'=m+1}^na_{i,k'}\sum_{\ell=1}^{k'-m}\frac1{(\ell-\frac12)^i}
-\sum_{k'=0}^ma_{i,k'}\sum_{\ell=k'-m+1}^0\frac1{(\ell-\frac12)^i}\biggr)
\\
&=\sum_{i=1}^s\biggl(\sum_{k'=m+1}^na_{i,k'}
\sum_{\ell=1}^{k'-m-1}\frac1{(\ell-\frac12)^i}
+\sum_{k'=m+1}^na_{i,k'}\frac1{(k'-m-\frac12)^i}
\\ &\qquad
-\sum_{k'=0}^ma_{i,k'}\sum_{\ell=k'-m}^0\frac1{(\ell-\frac12)^i}
+\sum_{k'=0}^ma_{i,k'}\frac1{(k'-m-\frac12)^i}\biggr)
\\
&=-a_0+\sum_{i=1}^s\sum_{k'=0}^n\frac{a_{i,k'}}{(k'-m-\frac12)^i}
=-a_0+R(-m-\tfrac12).
\end{align*}
Similarly, if $n$ is even, then $n-m=m+2$ and
\begin{align*}
a_0
&=\sum_{i=1}^s(-1)^i\biggl(\sum_{k'=m+2}^na_{i,k'}\sum_{\ell=m+2-k'}^0\frac1{(\ell-\frac12)^i}
-\sum_{k'=0}^{m+1}a_{i,k'}\sum_{\ell=1}^{m+1-k'}\frac1{(\ell-\frac12)^i}\biggr)
\\
&=\sum_{i=1}^s\biggl(\sum_{k'=m+2}^na_{i,k'}\sum_{\ell=1}^{k'-m-1}\frac1{(\ell-\frac12)^i}
-\sum_{k'=0}^{m+1}a_{i,k'}\sum_{\ell=k'-m}^0\frac1{(\ell-\frac12)^i}\biggr)
\\
&=\sum_{i=1}^s\biggl(\sum_{k'=m+1}^na_{i,k'}\sum_{\ell=1}^{k'-m-1}\frac1{(\ell-\frac12)^i}
-\sum_{k'=0}^ma_{i,k'}\sum_{\ell=k'-m}^0\frac1{(\ell-\frac12)^i}\biggr)
\\
&=-a_0.
\end{align*}
This implies the required formula for $a_0$.
\end{proof}

The characteristic polynomial of the recursion for $r_n$ as above is $\lambda^2-123\lambda+1$, and its zeroes are
quite recognisable: $((1\pm\sqrt5)/2)^{10}$. After performing some
experiments, it turns out that
$$
r_n
=\frac{\pi^2(2n)!^4}{(4n+1)!^2}
\,{}_3F_2\biggl[\begin{matrix} 2n+1, \, 2n+1, \, 2n+1 \\ 4n+2, \, 4n+2 \end{matrix}; 1\biggr],
$$
where the latter is a `rarified' sequence of the Ap\'ery approximations to $\zeta(2)$. This follows as a consequence
of the hypergeometric identity
\begin{multline} \label{eq:2n+1/4n+2}
\frac{\Gamma(4n+2)^2\Gamma(2n+1)^2}{\Gamma(3n+\frac32)^4}
\,{}_5F_4\biggl[\begin{matrix} 4n+1, \, n+\frac12, \, n+\frac12, \, n+\frac12, \, n+\frac12 \\[2pt]
3n+\frac32, \, 3n+\frac32, \, 3n+\frac32, \, 3n+\frac32 \end{matrix};1\biggr]
\\
={}_3F_2\biggl[\begin{matrix} 2n+1, \, 2n+1, \, 2n+1 \\ 4n+2, \, 4n+2 \end{matrix};1\biggr],
\end{multline}
which is in turn the particular case where $a=b=c=2n+1$ of the following general transformation.

\begin{theorem}
\label{th-pi2}
We have
\begin{align}
&
{} _{3} F _{2} \biggl[ \begin{matrix} a, \, b, \, c \\ \, -a+2 b+c, \, -a+b+2 c \end{matrix}\,; 1\biggr]
=\frac {\Ga(-\frac{a}{2}+b+c+\frac{1}{2})\, \Ga(-\frac{3 a}{2}+2 b+c+\frac{1}{2})}
{\Ga(-a+b+c+\frac{1}{2})\, \Ga(-a+2 b+c+\frac{1}{2})}
\nonumber\\ &\qquad\times
\frac { \Ga(-a+b+2 c)\, \Ga(-3 a+2 b+2 c) \,\Ga(-2 a+2 b+2 c) \,\Ga(-2 a+4 b+2 c)}
{\Ga(-\frac{3 a}{2}+b+2 c)\, \Ga(-\frac{5 a}{2}+2 b+2 c)\, \Ga(-a+2 b+2 c)\, \Ga(-3 a+4 b+2 c)}
\nonumber\\ &\qquad\times
 {}_5{F}_4 \biggl[ \begin{matrix} 
-2 a+2 b+2 c-1, \, c-\frac{a}{2}, \, -\frac{3 a}{2}+b+c, \, \frac{a}{2}, \, b-\frac{a}{2} \\[1.5pt]
 -\frac{3 a}{2}+2 b+c, \, -\frac{a}{2}+b+c, \, -\frac{5a}{2}+2 b+2 c, \, -\frac{3 a}{2}+b+2 c
\end{matrix}\, ; 1\biggr].
\label{eq:3F2-5F4}
\end{align}
\end{theorem}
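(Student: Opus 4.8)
The plan is to follow the same template as the proof of Theorem~\ref{th-cat}: locate a classical (quadratic) transformation of well-poised series and obtain \eqref{eq:3F2-5F4} from it by a suitable specialisation and reparametrisation. The first step is to read off the structure of the right-hand side. Writing $\alpha=-2a+2b+2c-1$, one checks that the ${}_5F_4$ in \eqref{eq:3F2-5F4} is \emph{well-poised} with leading parameter $\alpha$: indeed $1+\alpha=-2a+2b+2c$, and each numerator parameter $\tfrac a2,\;b-\tfrac a2,\;c-\tfrac a2,\;-\tfrac{3a}2+b+c$ is matched by a denominator parameter $-\tfrac{5a}2+2b+2c,\;-\tfrac{3a}2+b+2c,\;-\tfrac{3a}2+2b+c,\;-\tfrac a2+b+c$, the four pairwise sums all equalling $1+\alpha$. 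It is, however, \emph{not} very-well-poised, since $1+\tfrac\alpha2=\tfrac12-a+b+c$ does not occur among the numerator parameters. The halved parameters and the ``$\tfrac{3a}2,\tfrac{5a}2$'' shifts are the fingerprint of a quadratic transformation, so the natural ancestor is a two-term quadratic transformation relating a well-poised series at argument~$1$ to an ordinary ${}_3F_2(1)$, exactly as Rahman's transformation \cite{RV93} served Theorem~\ref{th-cat}.

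Concretely, I would start from a terminating very-well-poised transformation carrying a free integer parameter $-N$, and let $N\to\infty$ so that one side collapses to the ordinary ${}_3F_2$ on the left of \eqref{eq:3F2-5F4} while the other becomes the well-poised ${}_5F_4$. The special feature of the left-hand ${}_3F_2$ that makes this possible is visible in the $\bc$-coordinates of Section~\ref{s3}: with $a_1=a$, $a_2=b$, $a_3=c$, $b_2=-a+2b+c$, $b_3=-a+b+2c$ one finds $c_{22}=c_{33}=-a+b+c-1$, i.e. the two denominator parameters satisfy $b_2-a_2=b_3-a_3$ identically in $(a,b,c)$. This built-in coincidence is precisely the kind of degeneracy under which such a quadratic transformation applies; it is what distinguishes the left-hand ${}_3F_2$ from a generic one, and it persists into the application, where at $a=b=c=2n+1$ the numerator parameters collapse to the triple $2n+1$ and the denominators to the pair $4n+2$. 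Matching parameters then fixes the dictionary between $(a,b,c)$ and the parameters of the master identity.

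The last step is bookkeeping: after the limit and the substitution one must collapse the product of $\Gamma$-quotients coming from the terminating transformation into the single prefactor displayed in \eqref{eq:3F2-5F4}. This is where I expect the main obstacle to lie --- not in any conceptual difficulty, but in correctly tracking the $\Gamma$-factors through the $N\to\infty$ limit (several of them recombine through the reflection and duplication formulae) and verifying that no spurious factor survives, the eightfold quotient on the right being a delicate record of exactly these cancellations. As a safeguard I would specialise $a=b=c=2n+1$ and check against the already-recorded identity \eqref{eq:2n+1/4n+2}, and then test a second, generic numerical point --- precisely the kind of consistency verification that the packages HYP and HYPQ \cite{Kr95} are designed to automate.
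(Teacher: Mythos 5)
Your structural analysis is correct and well aimed: the $_5F_4$ in \eqref{eq:3F2-5F4} is indeed well-poised (not very-well-poised) with leading parameter $-2a+2b+2c-1$, and the left-hand $_3F_2$ does have the built-in degeneracy $b_2-a_2=b_3-a_3$, i.e.\ it is of the shape ${}_3F_2[a,b,c;d,d-b+c;1]$ with $d=-a+2b+c$. That degeneracy is exactly the entry point of the paper's proof, since it is the hypothesis of the quadratic transformation \eqref{eq:T3240} (Gasper--Rahman \cite[Eq.~(3.5.10), $q\to1$, reversed]{GR04}). So far, so good.

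The genuine gap is that your plan stops where the actual work begins: you never produce the ``master identity'' you propose to locate, and there is no evidence that a single classical terminating transformation with a free parameter $-N$ exists whose $N\to\infty$ limit yields \eqref{eq:3F2-5F4} in one stroke, in the way Rahman's quadratic transformation yielded \eqref{id-cat}. The paper's proof is not a one-shot specialisation: it first converts the $_3F_2$ into a very-well-poised $_7F_6(1)$ via \eqref{eq:T3240}, then composes \emph{three} nonterminating two-term transformations of very-well-poised $_7F_6$ series (Bailey's \cite[Sec.~7.5, Eqs.~(2) and (1)]{Ba35}, i.e.\ \eqref{eq:T7635}, then \eqref{eq:T7634}, then \eqref{eq:T7635} again) to reach \eqref{eq:3F2-7F6}, and only then imposes the constraint $d=c+2b-a$ --- your degeneracy --- which forces an upper parameter of the final $_7F_6$ to equal a lower one, collapsing it to the well-poised $_5F_4$. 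This chain-plus-collapse is the substance of the proof; the $\Gamma$-factor tracking you flag as the main obstacle is genuinely routine by comparison. That even the authors' $q$-analogue required the same three-step chain (using \cite[Eq.~(3.5.10)]{GR04}, \cite[Appendix (III.24)]{GR04} and \cite[Eq.~(2.10.1)]{GR04} in succession) is strong circumstantial evidence that no single off-the-shelf ancestor of the kind you postulate is available. Finally, the proposed safeguards --- checking the specialisation $a=b=c=2n+1$ against \eqref{eq:2n+1/4n+2} and testing a generic numerical point with HYP --- are sound sanity checks but cannot substitute for exhibiting the transformation chain, so as it stands the proposal is a plausible research programme rather than a proof.
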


\begin{proof}
We start with the
transformation formula (cf.\ \cite[Eq.~(3.5.10), $q\to 1$, reversed]{GR04})
\begin{multline} \label{eq:T3240}
{} _{3} F _{2} \biggl[ \begin{matrix} a, \, b, \, c \\ d, \, d - b + c \end{matrix}\, ;
   1\biggr]  = 
\frac {\Gamma( 2 d)\,\Gamma( 2 d - 2 b - a)\,\Gamma( d - b +
    c)\,\Gamma( d - a + c)} {\Gamma( 2 d - 2 b)\,\Gamma( 2 d - a)\,\Gamma( d + c)\,
   \Gamma( d - b - a + c)}
\\ \times
{}_7{F}_6 \biggl[ \begin{matrix} 
d-\frac{1}{2}, \, \frac{d}{2}+\frac{3}{4}, \, \frac{d}{2}-\frac{c}{2}, \, b, \, \frac{a}{2}, \, \frac{a}{2}+\frac{1}{2}, \, -\frac{c}{2}+\frac{d}{2}+\frac{1}{2} \\[1.5pt]
\frac{d}{2}-\frac{1}{4}, \,\frac{c}{2}+\frac{d}{2}+\frac{1}{2}, \, -b+d+\frac{1}{2}, \, -\frac{a}{2}+d+\frac{1}{2}, \, d-\frac{a}{2}, \, \frac{c}{2}+\frac{d}{2} 
                 \end{matrix}
                 ; 1\biggr].
\end{multline}
To the very-well-poised $_7F_6$-series on the right-hand side we apply
the transformation formula (cf.\ \cite[Sec.~7.5, Eq.~(2)]{Ba35})
\begin{align}
&
{} _{7} F _{6} \biggl[\begin{matrix} 
a, \, \frac{a}{2}+1, \, b, \, c, \, d, \, e, \, f \\[1.5pt]
 \frac{a}{2}, \, a-b+1, \, a-c+1, \, a-d+1, \, a-e+1, \, a-f+1\end{matrix};1 \biggr]
\nonumber\displaybreak[2]\\ &\quad
=\frac{\Ga(a-c+1) \,\Ga(a-d+1)\, \Ga(a-e+1)\, \Ga(a-f+1)}
{\Ga(a+1)\, \Ga(b)\, \Ga(2 a-b-c-d-e+2)\, \Ga(2 a-b-c-d-f+2)}
\nonumber\\ &\quad\qquad\kern-8pt 
\times
\frac {\Ga(3 a-2 b-c-d-e-f+3) \,\Ga(2 a-b-c-d-e-f+2)} 
{ \Ga(2 a-b-c-e-f+2)\, \Ga(2 a-b-d-e-f+2)}
\nonumber\displaybreak[2]\\ &\quad\qquad\kern-8pt 
\times
{}_7F_6 \biggl[\begin{matrix} 
3 a-2 b-c-d-e-f+2, \, \frac{3a}{2}-b-\frac{c}{2}-\frac{d}{2}-\frac{e}{2}-\frac{f}{2}+2, \, a-b-c+1, \\[1.5pt]
\frac{3a}{2}-b-\frac{c}{2}-\frac{d}{2}-\frac{e}{2}-\frac{f}{2}+1, \, 2 a-b-d-e-f+2,
\end{matrix}
\nonumber\\ &\quad\qquad \qquad\qquad\qquad
\begin{matrix}
a-b-d+1, \, a-b-e+1, \\
2 a-b-c-e-f+2, \, 2 a-b-c-d-f+2,
\end{matrix}
\nonumber\\ &\quad\qquad \qquad\qquad\qquad\qquad\qquad
\begin{matrix}
a-b-f+1, \, 2 a-b-c-d-e-f+2\\
2 a-b-c-d-e+2, \, a-b+1
\end{matrix};1 \biggr].
\label{eq:T7635}
\end{align}
Thus, we obtain
\begin{align*}
&
{} _{3} F _{2} \biggl[ \begin{matrix} a, \, b, \, c \\ d, \, d - b + c \end{matrix} \,; 1\biggr]
= \frac {\Ga(2 d)\, \Ga(d-\frac{a}{2})\, \Ga(-\frac{a}{2}+d+\frac{1}{2})\, \Ga(-b+d+\frac{1}{2})\, \Ga(\frac{c}{2}+\frac{d}{2})}
{\Ga(d+\frac{1}{2}) \,\Ga(2 d-a) \,\Ga(2 d-2 b)\, \Ga(\frac{d}{2}-\frac{c}{2})\, \Ga(c+d)}
\\ &\qquad\times
\frac {\Ga(-a-2 b+2 d)\, \Ga(-b+c+d) \, \Ga(-a-b+\frac{3 c}{2}+\frac{3 d}{2}+\frac{1}{2})}
{\Ga(-\frac{a}{2}-b+c+d)\, \Ga(-\frac{a}{2}-b+c+d+\frac{1}{2})\, \Ga(-a-b+\frac{c}{2}+\frac{3 d}{2}+\frac{1}{2})}
\displaybreak[2]\\ &\qquad\times
{} _{7} F _{6} \biggl[ \begin{matrix}
-a-b+\frac{3 c}{2}+\frac{3 d}{2}-\frac{1}{2}, \, -\frac{a}{2}-\frac{b}{2}+\frac{3 c}{4}+\frac{3 d}{4}+\frac{3}{4}, \,
-\frac{a}{2}+\frac{c}{2}+\frac{d}{2}, \, c,
\\[1.5pt]
-\frac{a}{2}-\frac{b}{2}+\frac{3 c}{4}+\frac{3 d}{4}-\frac{1}{4}, \,
-\frac{a}{2}-b+c+d+\frac{1}{2}, \, -a-b+\frac{c}{2}+\frac{3 d}{2}+\frac{1}{2},
\end{matrix}
\\ &\qquad\qquad\qquad\qquad
\begin{matrix}
-a-b+c+d, \, -b+\frac{c}{2}+\frac{d}{2}+\frac{1}{2}, \,  -\frac{a}{2}+\frac{c}{2}+\frac{d}{2}+\frac{1}{2} \\[1.5pt]
\frac{c}{2}+\frac{d}{2}+\frac{1}{2}, \, -a+c+d, \, -\frac{a}{2}-b+c+d
\end{matrix} ; 1 \biggr] .
\end{align*}
Next we apply the transformation formula (cf.\ \cite[Sec.~7.5, Eq.~(1)]{Ba35})
\begin{align}
&
{} _{7} F _{6} \biggl[\begin{matrix} 
a, \, \frac{a}{2}+1, \, b, \, c, \, d, \, e, \, f \\[1.5pt]
 \frac{a}{2}, \, a-b+1, \, a-c+1, \, a-d+1, \, a-e+1, \, a-f+1\end{matrix};1 \biggr]
\nonumber\displaybreak[2]\\ &\quad
=\frac {\Ga(a-e+1) \, \Ga(a-f+1) \, \Ga(2 a-b-c-d+2) \, \Ga(2 a-b-c-d-e-f+2) } 
{\Ga(a+1) \, \Ga(a-e-f+1) \, \Ga(2 a-b-c-d-e+2) \, \Ga(2 a-b-c-d-f+2)}
\nonumber\displaybreak[2]\\ &\quad\qquad \times
{} _{7} F _{6} \biggl[ \begin{matrix}
2 a-b-c-d+1, \, a-\frac{b}{2}-\frac{c}{2}-\frac{d}{2}+\frac{3}{2}, \, a-c-d+1, \, a-b-d+1, \\[1.5pt]
a-\frac{b}{2}-\frac{c}{2}-\frac{d}{2}+\frac{1}{2}, \, a-b+1,\, a-c+1,
\end{matrix}
\nonumber\\ &\qquad\qquad\qquad\qquad
\begin{matrix}
a-b-c+1, \, e, \, f \\
a-d+1, \, 2 a-b-c-d-e+2, \, 2 a-b-c-d-f+2
\end{matrix}\,;  1\biggr] .
\label{eq:T7634}
\end{align}
We arrive at
\begin{align*}
&
{} _{3} F _{2} \biggl[ \begin{matrix} a, \, b, \, c \\ d, \, d - b + c \end{matrix} \,; 1\biggr]
=\frac {\Ga(2 d)\, \Ga(d-\frac{a}{2})\, \Ga(\frac{c}{2}+\frac{d}{2})\, \Ga(-a-2 b+2 d)} 
{\Ga(d+\frac{1}{2}) \,\Ga(2 d-a)\, \Ga(2 d-2 b)\, \Ga(c+d)}
\\ &\qquad\times
\frac{\Ga(-a+c+d)\, \Ga(-b+c+d)\, \Ga(-\frac{a}{2}-b+\frac{c}{2}+\frac{3 d}{2}+1)}
{\Ga(-\frac{a}{2}+\frac{c}{2}+\frac{d}{2}-\frac{1}{2})\, \Ga(-\frac{a}{2}-b+c+d+\frac{1}{2})\,
\Ga(-a-b+\frac{c}{2}+\frac{3 d}{2}+\frac{1}{2})}
\displaybreak[2]\\ &\qquad\times
{} _{7} F _{6} \biggl[ \begin{matrix}
-\frac{a}{2}-b+\frac{c}{2}+\frac{3 d}{2}, \, -\frac{a}{4}-\frac{b}{2}+\frac{c}{4}+\frac{3 d}{4}+1, \,
-\frac{a}{2}+\frac{c}{2}+\frac{d}{2}+\frac{1}{2}, \, -\frac{c}{2}+\frac{d}{2}+\frac{1}{2}, \\[1.5pt]
-\frac{a}{4}-\frac{b}{2}+\frac{c}{4}+\frac{3 d}{4}, \, -b+d+\frac{1}{2}, \, -\frac{a}{2}-b+c+d+\frac{1}{2},
\end{matrix}
\\ &\qquad\qquad\qquad\qquad\qquad\qquad
\begin{matrix}
\frac{a}{2}+\frac{1}{2}, \, -\frac{a}{2}-b+d+\frac{1}{2}, \, -b+\frac{c}{2}+\frac{d}{2}+\frac{1}{2} \\[1.5pt]
-a-b+\frac{c}{2}+\frac{3 d}{2}+\frac{1}{2}, \, \frac{c}{2}+\frac{d}{2}+\frac{1}{2}, \, -\frac{a}{2}+d+\frac{1}{2}
\end{matrix} ; 1\biggr] .
\end{align*}
Now we apply again \eqref{eq:T7635}. As a result, we obtain
\begin{align}
&
{} _{3} F _{2} \biggl[ \begin{matrix} a, \, b, \, c \\ d, \, d - b + c \end{matrix} \,; 1\biggr]
=\frac {\Ga(2 d)\, \Ga(-\frac{a}{2}+d+\frac{1}{2})\, \Ga(\frac{c}{2}+\frac{d}{2}+\frac{1}{2})\, \Ga(-a-2 b+2 d)} 
{\Ga(d+\frac{1}{2}) \,\Ga(2 d-a)\, \Ga(2 d-2 b) \,\Ga(c+d) }
\nonumber\\ &\qquad\times
\frac {\Ga(-a+c+d) \,\Ga(-b+c+d)\, \Ga(-\frac{a}{2}-b+\frac{c}{2}+\frac{3 d}{2})}
{\Ga(-\frac{a}{2}+\frac{c}{2}+\frac{d}{2}+\frac{1}{2})\, \Ga(-\frac{a}{2}-b+c+d)\, \Ga(-a-b+\frac{c}{2}+\frac{3 d}{2})}
\nonumber\displaybreak[2]\\ &\qquad\times
{} _{7} F _{6} \biggl[ \begin{matrix}
-\frac{a}{2}-b+\frac{c}{2}+\frac{3 d}{2}-1, \, -\frac{a}{4}-\frac{b}{2}+\frac{c}{4}+\frac{3 d}{4}+\frac{1}{2}, \,
-b+\frac{c}{2}+\frac{d}{2}, \, -\frac{a}{2}-b+d, \\[1.5pt]
-\frac{a}{4}-\frac{b}{2}+\frac{c}{4}+\frac{3 d}{4}-\frac{1}{2}, \, d-\frac{a}{2}, \,  \frac{c}{2}+\frac{d}{2},
\end{matrix}
\nonumber\\ &\qquad\qquad\qquad\qquad\qquad\qquad
\begin{matrix}
\frac{a}{2}, \, \frac{d}{2}-\frac{c}{2}, \, -\frac{a}{2}+\frac{c}{2}+\frac{d}{2}-\frac{1}{2} \\[1.5pt]
-a-b+\frac{c}{2}+\frac{3 d}{2}, \, -\frac{a}{2}-b+c+d, \, -b+d+\frac{1}{2} 
\end{matrix} ; 1\biggr].
\label{eq:3F2-7F6}
\end{align}
Here, we equate the second upper parameter and the last lower
parameter in the $_7F_6$-series, that is,
$$
\textstyle
-\frac{a}{4}-\frac{b}{2}+\frac{c}{4}+\frac{3 d}{4}+\frac{1}{2}
=-b+d+\frac{1}{2} ,
$$
or, equivalently, $d=c+2b-a$. If we make this substitution in
\eqref{eq:3F2-7F6}, then the $_7F_6$-series reduces to a
$_5F_4$-series. The corresponding transformation formula is \eqref{eq:3F2-5F4}.
\end{proof}

$q$-Analogues of \eqref{eq:2n+1/4n+2}, \eqref{eq:3F2-5F4}, and \eqref{eq:3F2-7F6} can be
obtained by going through the analogous computations when using 
the $_8\phi_7$-transformation formula \cite[Eq.~(3.5.10)]{GR04} 
instead of \eqref{eq:T3240}, the $_8\phi_7$-transformation formula
\cite[Appendix (III.24)]{GR04} instead of \eqref{eq:T7635},
and the $_8\phi_7$-transformation formula \cite[Eq.~(2.10.1)]{GR04} 
instead of \eqref{eq:T7634}. The $q$-analogue of
\eqref{eq:3F2-7F6} obtained in this way is
\begin{multline} \label{eq:3F2-7F6Q}
{} _{8} \phi _{7} \biggl[ \begin{matrix} 
\def\frac#1#2{#1 / #2}  -cd/q, i \sqrt{c d q}, -i
   \sqrt{c d q}, b, -b, c, -c, a
\\
i\sqrt{c d/q},
    -i\sqrt{c d/q}, -c d/b, c d/b, -d, d,
    -c d/a
\end{matrix} ; q,
   \frac {d^2} {ab^2}\biggr] 
\\
=\frac {\def\frac#1#2{#1 / #2}  
(d^2 q, \frac{c d q}{a}, \frac{c d^3}{a^2
    b^2},
                     \frac{c^2 d^2}{a b^2};
                     q^2)_\infty\,
            (\frac{d^2}{b^2}, \frac{d^2}{a},
                     c d, -c d, \frac{c d}{a b},
                     -\frac{c d}{a b};
                     q)_\infty}
    {\def\frac#1#2{#1 / #2}  
(\frac{c^2 d^2}{a^2
                     b^2}, c d q, \frac{d^2 q}{a},
                   \frac{c d^3}{a b^2};
                     q^2)_\infty
\,
(d^2, \frac{d^2}{a b^2}, \frac{c
                     d}{a}, -\frac{c d}{a}, \frac{c
                     d}{b}, -\frac{c
                     d}{b}; q)_\infty}
\\
\times
{} _{8} \phi _{7} \biggl[ \begin{matrix} 
\def\frac#1#2{#1 / #2} 
\frac{c d^3}{a b^2 q^2}, \sqrt{\frac{c d^3 q^2}{a b^2 }},
                     - \sqrt{\frac{c d^3 q^2}{a b^2 }},
                     \frac{c d}{b^2},
                     \frac{d^2}{a b^2}, a,
                     \frac{d}{c}, \frac{c d}{a q}\\
\def\frac#1#2{#1 / #2} 
\sqrt{\frac{c d^3 }{a b^2 q^2}}, - \sqrt{\frac{c d^3 }{a b^2 q^2}},
 \frac{d^2}{a}, c d,
                   \frac{c d^3}{a^2 b^2},
                     \frac{c^2 d^2}{a b^2}, \frac{d^2 q}{b^2}
\end{matrix} ; q^2,
   \frac {cdq} {a}\biggr] . 
\end{multline}
Similarly to before, we equate the second upper parameter and the
last lower parameter in the $_8\phi_7$-series on the right-hand side,
that is, 
$$
\sqrt{\frac{c d^3 q^2}{a b^2 }}=\frac{d^2 q}{b^2},
$$
or, equivalently, $d=cb^2/a$. If we substitute this in
\eqref{eq:3F2-7F6Q}, then we obtain
\begin{align*} 
&
{} _{8} \phi _{7} \biggr[ \begin{matrix} 
\def\frac#1#2{#1 / #2}  
 -\frac{b^2 c^2}{a q}, \frac{i b c \sqrt{q}}{\sqrt{a}},
 -\frac{i b c
                     \sqrt{q}}{\sqrt{a}}, b, -b,
                   c, -c, a \\
\def\frac#1#2{#1 / #2}  
\frac{i b c}{\sqrt{a} \sqrt{q}}, -\frac{i b c}{\sqrt{a}
  \sqrt{q}},
                     -\frac{b c^2}{a}, \frac{b c^2}{a},
                     -\frac{b^2 c}{a},
                     \frac{b^2 c}{a}, -\frac{b^2 c^2}{a^2}
\end{matrix} ; q,
   \frac {b^2 c^2} {a^3}\biggr]
\displaybreak[2]\\ &\quad
=
\frac {\def\frac#1#2{#1 / #2}  
(\frac{b^4 c^2 q}{a^2}, \frac{b^2 c^2 q}{a^2},
  \frac{b^4
                     c^4}{a^5}, \frac{b^2 c^4}{a^3};
                     q^2)_\infty}
    {\def\frac#1#2{#1 / #2}  
(\frac{b^2 c^4}{a^4}, \frac{b^2 c^2
                     q}{a}, \frac{b^4 c^2 q}{a^3},
                   \frac{b^4 c^4}{a^4};
                     q^2)_\infty}
\\ &\quad\qquad
\times
\frac {\def\frac#1#2{#1 / #2}  
 (\frac{b^4 c^2}{a^3}, \frac{b^2
                     c^2}{a}, -\frac{b^2 c^2}{a},
                   \frac{b c^2}{a^2}, -\frac{b
                     c^2}{a^2}; q)_\infty}
    {\def\frac#1#2{#1 / #2}  
   (\frac{b^4 c^2}{a^2},
                     \frac{b^2 c^2}{a^3}, -\frac{b^2
                       c^2}{a^2}, \frac{b
                     c^2}{a}, -\frac{b c^2}{a}; q)_\infty}
\\ &\quad\qquad
\times
{} _{6} \phi _{5} \biggl[ \begin{matrix} 
\def\frac#1#2{#1 / #2}  
\frac{b^4 c^4}{a^4 q^2}, -\frac{b^2 c^2 q}{a^2},
                     \frac{c^2}{a}, \frac{b^2 c^2}{a^3},
                     a, \frac{b^2}{a}\\
\def\frac#1#2{#1 / #2}  
-\frac{b^2 c^2}{a^2 q}, \frac{b^4 c^2}{a^3}, \frac{b^2
                     c^2}{a}, \frac{b^4 c^4}{a^5},
                   \frac{b^2 c^4}{a^3} 
\end{matrix} ; q^2,
   \frac {b^2 c^2 q} {a^2}\biggr]  ,
\end{align*}
a $q$-analogue of \eqref{eq:3F2-5F4}. 
Setting all of $a,b,c$ equal to $q^{2n+1}$, we arrive at a
$q$-analogue of \eqref{eq:2n+1/4n+2}, namely
\begin{align*} 
&
{} _{8} \phi _{7} \biggl[ \begin{matrix} 
\def\frac#1#2{#1 / #2}  
 -q^{6 n+2}, i q^{3 n+2}, -i q^{3 n+2}, q^{2
   n+1},
                     -q^{2 n+1}, q^{2 n+1}, -q^{2
                       n+1}, q^{2 n+1} \\
\def\frac#1#2{#1 / #2}  
i q^{3 n+1}, -i q^{3 n+1}, -q^{4 n+2}, q^{4
  n+2},
                     -q^{4 n+2}, q^{4 n+2}, -q^{4 n+2} 
\end{matrix} ; q,
   q^{2n+1}\biggr]
\\ &\quad
=
\frac {(q^{6 n+3}, q^{6 n+3}, -q^{6 n+3}, -q^{2
    n+1};
                     q)_\infty\,
   (q^{8 n+5}, q^{4
                     n+3}, q^{6 n+3}, q^{6 n+3};
                     q^2)_\infty }
    {(q^{8 n+4}, -q^{4 n+2}, q^{4 n+2},
                     -q^{4 n+2}; q)_\infty\,
   (q^{4 n+2}, q^{6 n+4}, q^{6
                     n+4}, q^{8 n+4}; q^2)_\infty}
\\ &\quad\qquad
\times
{} _{6} \phi _{5} \biggl [ \begin{matrix} 
\def\frac#1#2{#1 / #2}  
q^{8 n+2}, -q^{4 n+3}, q^{2 n+1}, q^{2
  n+1}, q^{2
                     n+1}, q^{2 n+1} \\
 -q^{4 n+1}, q^{6 n+3}, q^{6 n+3}, q^{6
   n+3}, q^{6
                     n+3} 
\end{matrix} ; q^2,
   q^{4n+3}\biggr]  .
\end{align*}

\section{Zeta values}
\label{s5}

In this section we prove Theorem~\ref{th-zeta}.

\begin{proof}[Proof of Theorem~\textup{\ref{th-zeta}}]
Fix an even integer $s\ge8$ and define the rational functions
\begin{align*}
R(t)=R_n(t)
&=\frac{n!^{s-6}\cdot2^{12n+1}(t+\frac n2)\prod_{j=1}^{3n}(t-n-\frac12+j)^2}{\prod_{j=0}^n(t+j)^s},
\\
\wh R(t)=\wh R_n(t)
&=\frac{n!^{s-6}\cdot2^{12n}\prod_{j=1}^{3n}(t-n-\frac12+j)^2}{\prod_{j=0}^n(t+j)^s},
\end{align*}
both vanishing together with their derivatives at $t=\nu-n+\frac12$ for $\nu=0,1,\dots,3n-1$. Then Lemma~\ref{lem1}
and the results from \cite[Section 2]{Zu18} apply, and we obtain the linear forms
\begin{align*}
r_n&=\sum_{\nu=1}^\infty R_n(\nu-\tfrac12)
=\sum_{\substack{i=2\\i\;\text{odd}}}^sa_i(2^i-1)\zeta(i)+a_0,
\\
r_n'&=-\sum_{\nu=1}^\infty\frac{\d R_n}{\d t}(\nu-\tfrac12)
=\sum_{\substack{i=2\\i\;\text{odd}}}^sa_ii(2^{i+1}-1)\zeta(i+1),
\displaybreak[2]\\
\wh r_n&=\sum_{\nu=1}^\infty\wh R_n(\nu-\tfrac12)
=\sum_{\substack{i=2\\i\;\text{even}}}^s\hat a_i(2^i-1)\zeta(i),
\\
\wh r_n'&=-\sum_{\nu=1}^\infty\frac{\d\wh R_n}{\d t}(\nu-\tfrac12)
=\sum_{\substack{i=2\\i\;\text{even}}}^s\hat a_ii(2^{i+1}-1)\zeta(i+1)+\hat a_0,
\end{align*}
with the following inclusions available:
$$
d_n^{s-i}a_i,\; d_n^{s-i}\hat a_i\in\mathbb Z \quad\text{for}\; i=2,3,\dots,s,
\quad\text{and}\quad
d_n^sa_0,\; d_n^{s+1}\hat a_0\in\mathbb Z.
$$
Here $d_n$ denotes the least common multiple of
$1,\dots,n$. Its asymptotic behaviour $d_n^{1/n}\to e$ as $n\to\infty$ 
follows from the prime number theorem.

The standard asymptotic machinery \cite[Section 2]{Zu02a} implies that
$$
\lim_{n\to\infty}|r_n|^{1/n}
=\lim_{n\to\infty}|\wh r_n|^{1/n}
=g(x_0)
$$
and
$$
\lim_{n\to\infty}|r_n'|^{1/n}
=\lim_{n\to\infty}|\wh r_n'|^{1/n}
=g(x_0'),
$$
where
$$
g(x)=\frac{2^{12}(x+3)^6(x+1)^s}{(x+2)^{2s}},
$$
and $x_0$, $x_0'$ are the real zeroes of the polynomial
$$
x^2(x+2)^s-(x+3)^2(x+1)^s
$$
on the intervals $x>0$ and $-1<x<0$, respectively. It can also be
observed numerically for each choice of even $s$ that 
$0<g(x_0')<g(x_0)$, so that
$$
\lim_{n\to\infty}|r_n-\mu r_n'|^{1/n}
=\lim_{n\to\infty}|\wh r_n-\hat\mu\wh r_n'|^{1/n}
=g(x_0)
$$
for any real $\mu$ and $\wh\mu$. Theorem~\ref{th-zeta} follows from taking $\mu=\lambda/\pi$ for the first collection,
$\hat\mu=4\lambda\pi$ for the second one, and noticing that, when $s=40$, we obtain
$$
g(x_0)=\exp(-40.54232882\dots)
\quad\text{and}\quad
g(x_0')=\exp(-40.54234026\dots),
$$
while for $s=42$ we get
\begin{equation*}
g(x_0)=\exp(-43.31492040\dots)
\quad\text{and}\quad
g(x_0')=\exp(-43.31492612\dots).
\qedhere
\end{equation*}
\end{proof}

Finally, we remark that further variations on Theorem~\ref{th-zeta} are possible by combining the two hypergeometric
constructions from this section and \cite{Zu18} (see also related applications in \cite{FSZ18} and \cite{RZ18}).
As the corresponding results remain similar in spirit to the theorem, we do not pursue this line here.


\end{document}